\pgfplotsset{compat=1.14}
\def\dint{\int\!\!\!\int}
\theoremstyle{plain}
\newtheorem{theorem}{Theorem}[section]
\newtheorem{lemma}[theorem]{Lemma}
\newtheorem{proposition}[theorem]{Proposition}
\theoremstyle{remark}
\begin{document}
\title{Singular value shrinkage priors for Bayesian prediction}
\author{Takeru Matsuda\thanks{Department of Mathematical Informatics, Graduate School of Information Science and Technology, University of Tokyo, 7-3-1 Hongo, Bunkyo-ku, Tokyo 113-0033, Japan} \and Fumiyasu Komaki\footnotemark[1]}
	\date{}
	
	\maketitle

\begin{abstract}
	We develop singular value shrinkage priors for the mean matrix parameters in the matrix-variate normal model with known covariance matrices.
	Our priors are superharmonic and put more weight on matrices with smaller singular values. 
	They are a natural generalization of the Stein prior.
	Bayes estimators and Bayesian predictive densities based on our priors are minimax and dominate those based on the uniform prior in finite samples.
	In particular, our priors work well when the true value of the parameter has low rank.
\end{abstract}

\footnote[0]{This is a pre-copyedited, author-produced version of an article accepted for publication in \textit{Biometrika} following peer review. The version of record (T. Matsuda and F. Komaki. Singular value shrinkage priors for Bayesian prediction. \textit{Biometrika} \textbf{102}, 843--854, 2015.) is available online at \url{https://doi.org/10.1093/biomet/asv036}.}

\section{Introduction}
Suppose that we have a matrix observation $Y \sim {\rm N}_{n,m} (M, C, \Sigma)$ where $C$ and $\Sigma$ are known.
Here we use the notation of \cite{Dawid} for matrix-variate normal distributions:
$Y \sim {\rm N}_{n,m} (M, C, \Sigma)$
indicates that $Y (n \times m)$ has a probability density
\begin{align*}
p (Y)=& \frac{1}{(2 \pi)^{n m/2} (\det C)^{m/2} (\det \Sigma)^{n/2}} {\rm etr} \left\{ -\frac{1}{2} \Sigma^{-1} (Y-M)^{\top} C^{-1} (Y-M) \right\}
\end{align*}
with respect to the Lebesgue measure on $\mathbb{R}^{n \times m}$, where $\mathrm{etr} (A) = \exp (\mathrm{tr } A)$, $M(n \times m)$ is the mean matrix, 
$C (n \times n) \succ 0$ is the covariance matrix for rows, and $\Sigma (m \times m) \succ 0$ is the covariance matrix for columns.
Here, we indicate the size of a matrix $A \in \mathbb{R}^{n \times m}$ by writing $A (n \times m)$.
The vectorization ${\rm vec} (Y)$ of $Y$ satisfies
\begin{equation}
{\rm vec} (Y) \sim {\rm N}_{m n} \{ {\rm vec} (M), \Sigma \otimes C \}, \label{vectorization}
\end{equation}
where $\otimes$ denotes the Kronecker product \citep{Gupta}.
Here, the vectorization of $A (n \times m)$ is the $m n \times 1$ vector defined by
${\rm vec} (A) = (a_{11}, \ldots, a_{n1}, a_{12}, \ldots, a_{n2}, \ldots, a_{1m}, \ldots, a_{nm})^{\top}$,
and the Kronecker product $A \otimes B$ of two matrices $A (m \times n) = (a_{ij})$ and $B (p \times q) = (b_{ij})$ is the $mp \times nq$ matrix 
\begin{equation*}
A \otimes B = \begin{pmatrix} a_{1 1} B & a_{1 2} B & \cdots & a_{1 n} B \\
a_{2 1} B & a_{2 2} B & \cdots & a_{2 n} B \\
\vdots & \vdots & \ddots & \vdots \\
a_{m 1} B & a_{m 2} B & \cdots & a_{m n} B \\
\end{pmatrix}.
\end{equation*}

We consider the prediction of $\widetilde{Y} \sim {\rm N}_{n,m} (M, \widetilde{C}, \widetilde{\Sigma})$ by a predictive density $\hat{p} (\widetilde{Y} \mid Y)$, where $\widetilde{C}$ and $\widetilde{\Sigma}$ are known.
We evaluate predictive densities by the Kullback--Leibler divergence
\begin{equation*}
D \{\widetilde{p}(\cdot \mid M), \hat{p} (\cdot \mid Y)\}
= \int \widetilde{p}(\widetilde{Y} \mid M) \log \frac{\widetilde{p}(\widetilde{Y} \mid M)}{\hat{p} (\widetilde{Y} \mid Y)} {\rm d} \widetilde{Y}
\end{equation*}
as a loss function. The Kullback--Leibler risk function of a predictive density $\hat{p} (\widetilde{Y} \mid Y)$ is
\begin{align*}
R_{{\rm KL}} (M, \hat{p}) &=  E \left[ D \left\{ \widetilde{p}(\cdot \mid M), \hat{p} (\cdot \mid Y) \right\} \right] = \dint p(Y \mid M) \widetilde{p}(\widetilde{Y} \mid M) \log \frac{\widetilde{p}(\widetilde{Y} \mid M)}{\hat{p} (\widetilde{Y} \mid Y)} {\rm d} Y {\rm d} \widetilde{Y}.
\end{align*}
We consider Bayesian predictive densities with a prior $\pi(M)$:
\begin{align*}
\hat{p}_{\pi} (\widetilde{Y} \mid Y) & = \int \widetilde{p} (\widetilde{Y} \mid M) \pi (M \mid Y) {\rm d} M
= \frac{\int \widetilde{p} (\widetilde{Y} \mid M) p (Y \mid M) \pi (M) {\rm d} M}{\int \widetilde{p} (\widetilde{Y} \mid M) \pi (M) {\rm d} M}.
\end{align*}

Multivariate linear regression can be formulated as the prediction of the matrix-variate normal model.
Suppose that we predict $\widetilde{Y} \sim {\rm N}_{m,q} (\widetilde{X} B, \sigma^2 I_m, I_q)$ based on $Y \sim {\rm N}_{n,q} (X B, \sigma^2 I_n, I_q)$,
where $X (n \times p)$ and $\widetilde{X} (m \times p)$ are explanatory variables,
$Y (n \times q)$ and $\widetilde{Y} (m \times q)$ are response variables, 
$B (p \times q)$ is a regression coefficient matrix and $\sigma^2$ is a known variance.
Since $Y_1 = (X^{\top} X)^{-1} X^{\top} Y$ and $\widetilde{Y}_1 = (\widetilde{X}^{\top} \widetilde{X})^{-1} \widetilde{X}^{\top} \widetilde{Y}$ are sufficient for $B$,
the problem reduces to the prediction of $\widetilde{Y}_1 \sim {\rm N}_{p,q} \{B, \sigma^2 (\widetilde{X}^{\top} \widetilde{X})^{-1}, I_q \}$
based on $Y_1 \sim {\rm N}_{p,q} \{ B, \sigma^2 (X^{\top} X)^{-1}, I_q \}$.

The prediction of $\widetilde{y} \sim {\rm N}_n (\mu, \widetilde{\Sigma})$ based on $y \sim {\rm N}_n (\mu, \Sigma)$,
which corresponds to $m=1$, has been studied by several authors when $n \geq 3$.
When $\widetilde{\Sigma}$ is proportional to $\Sigma$,
\cite{Komaki01} gave the analytical form of Bayesian predictive densities based on the Stein prior
$\pi_{{\rm S}} (\mu) = \| \mu \|^{-(n-2)}$
and proved that Bayesian predictive densities based on this prior dominate those based on the Jeffreys prior under the Kullback--Leibler risk.
Since the Stein prior puts more weight near the origin, the risk reduction is large when the true value of $\mu$ is near the origin.
\cite{George06} generalized this result and 
proved that Bayesian predictive densities based on superharmonic priors dominate those based on the Jeffreys prior. 
\cite{Kobayashi} and \cite{George08} 
considered cases where $\widetilde{\Sigma}$ is not necessarily proportional to $\Sigma$.
Bayesian predictive densities based on superharmonic priors also dominate 
those based on the uniform prior in this general situation. 
They applied their results to linear regression.

In the following, we assume that $n-m \geq 2$.
Since matrix-variate normal distributions are special cases of vector-variate normal distributions as in \eqref{vectorization},
above results also apply to the former distributions.
In this paper, we propose superharmonic priors, which shrink the singular values of $M$
and are a natural generalization of the Stein prior.
Bayes estimators and Bayesian predictive densities based on our priors are minimax and dominate those based on the uniform prior.
The risk reduction is larger when the true $M$ has smaller singular values,
so our priors work particularly well when the true $M$ has low rank.
In multivariate linear regression, we can reasonably expect $M$ to have low rank \citep{Reinsel}.
Previously proposed superharmonic priors mainly shrink the posteriors
to simple subsets such as a point or a linear subspace of the parameter space.
In contrast, our priors shrink the posteriors to the sets of low-rank matrices.

Our priors have several advantages over previously proposed priors for the matrix-variate normal model.
\cite{Tsukuma08} proposed hierarchical priors that are a natural generalization of Strawderman's prior 
and proved admissibility and minimaxity of the Bayes estimators based on them.
However, it is unknown whether Tsukuma's priors perform well in prediction.
Our priors provide good prediction as well as estimation.
Also, only simple Monte Carlo sampling from the normal distribution is sufficient to calculate Bayes estimators and Bayesian predictive densities based on our priors, 
whereas Markov chain Monte Carlo methods are required to obtain Bayes estimators based on Tsukuma's priors.

\section{Singular value shrinkage priors}
\subsection{Singular value shrinkage estimators}
Singular value shrinkage was utilized by \cite{Efron} and \cite{Stein74} for estimation.
Here we review their work.
In this subsection, we assume that $Y \sim {\rm N}_{n,m} (M, I_n, I_m)$, where $I_k$ is the $k$-dimensional identity matrix.
We consider the estimation of $M$.

Let
$Y = U \Lambda V^{\top}$,
$U \in O(n)$, $V \in O(m)$, $\Lambda = \left\{ {\rm diag} (\sigma_1, \ldots, \sigma_m) \; O_{m,n-m} \right\}^{\top}$
be the singular value decomposition of a matrix $Y$, where $O_{m,n-m}$ is the zero matrix of size $m \times (n-m)$, and
$\sigma_1 \geq \cdots \geq \sigma_m \geq 0$ are the singular values of $Y$.
Similarly, let $\hat{M} = \hat{U} \hat{\Lambda} \hat{V}^{\top}$,
$\hat{U} \in O(n)$, $\hat{V} \in O(m)$, $\hat{\Lambda} = \left\{ {\rm diag} (\hat{\sigma}_1, \cdots, \hat{\sigma}_m) \; O_{m,n-m} \right \}^{\top}$
be the singular value decomposition of an estimator $\hat{M}$ of $M$, 
where $\hat{\sigma}_1 \geq \cdots \geq \hat{\sigma}_m \geq 0$ are the singular values of $\hat{M}$.

\cite{Efron} proposed
$\hat{M}_{{\rm EM}} = Y \left\{ I_m-(n-m-1)S^{-1} \right\}$ 
as an empirical Bayes estimator, where $S=Y^{\top} Y$.
They proved that $\hat{M}_{{\rm EM}}$ is minimax and dominates the maximum likelihood estimator under the Frobenius loss
$l(M,\hat{M}) = \| \hat{M}-M \|_{{\rm F}}^2 = \sum_{i,j} (\hat{M}_{i j} - M_{i j})^2$.
\cite{Stein74} noticed that $\hat{M}_{{\rm EM}}$ can be represented in the singular value decomposition form as
\begin{equation*}
\hat{\sigma}_i = \left( 1 - \frac{n-m-1}{\sigma_i^2} \right) \sigma_i \quad (i=1, \ldots, m), \quad 
\hat{U} = U, \quad \hat{V} = V.
\end{equation*}
Therefore, $\hat{M}_{{\rm EM}}$ shrinks the singular values of the observation $Y$.
When $m=1$, $\hat{M}_{{\rm EM}}$ coincides with the James--Stein estimator.
We note that $\hat{M}_{{\rm EM}}$ is not a Bayes estimator.

In this study, we develop superharmonic priors shrinking the singular values.
The Bayes estimators based on our priors have similar properties to $\hat{M}_{{\rm EM}}$.
This is an extension of the relationship between the James--Stein estimator and the Stein prior.

\subsection{Definition and superharmonicity}
We consider the prior 
\begin{equation}
\pi_{{\rm SVS}} (M) = \det (M^{\top} M)^{-(n-m-1)/2}, \label{SV_shrinkage_prior}
\end{equation}
with $n-m \geq 2$.
From the relation $ \det (M^{\top} M) = \prod_{i=1}^m \sigma_i(M)^2$,
where $\sigma_i(M)$ denotes the $i$-th singular value of $M$, we obtain
\begin{equation*}
\pi_{{\rm SVS}} (M) = \prod_{i=1}^m \sigma_i(M)^{-(n-m-1)}.
\end{equation*}
Therefore, this prior puts more weight on matrices with smaller singular values.
When $m=1$, $\pi_{{\rm SVS}}$ coincides with the Stein prior.
We call $\pi_{{\rm SVS}}$ the singular value shrinkage prior below.

We provide a proof of superharmonicity of  $\pi_{{\rm SVS}}$.
An extended real-valued function $f: \mathbb{R}^d \to \mathbb{R} \cup \{ \infty \}$ is said to be superharmonic if it satisfies the following properties \citep[p.~70]{Helms}:

\begin{enumerate}
	\item $f$ is lower semicontinuous;
	
	\item $f \not\equiv \infty$;
	
	\item $L(f: x, \delta)= \int_{S_{x,\delta}} f(z) {\rm d} s(z) / (\Omega_d \delta^{d-1}) \leq f(x)$ for every $x \in \mathbb{R}^d$ and $\delta > 0$,
	where $\Omega_d$ is the surface area of the unit sphere in $\mathbb{R}^d$ and $S_{x,\delta}$ is the sphere with center $x$ and radius $\delta$.
\end{enumerate}

If $f$ is a $C^2$ function, then $f$ is superharmonic if and only if $\Delta f (x) = \sum_{i=1}^d (\partial^2 / \partial x_i^2) f(x) \leq 0$
holds for every $x$ from Lemma 3.3.4 of \cite{Helms}.
We define a function $f: \mathbb{R}^{n \times m} \to \mathbb{R}$ to be superharmonic 
when $f \circ {\rm vec}: \mathbb{R}^{m n} \to \mathbb{R}$ is superharmonic.

\begin{theorem} \label{th_piSVS_superharmonic}
	The prior density $\pi_{{\rm SVS}}$ is superharmonic.
\end{theorem}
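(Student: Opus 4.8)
The plan is to verify the three defining conditions of superharmonicity for $\pi_{\mathrm{SVS}} \circ \mathrm{vec}$ on $\mathbb{R}^{mn}$, treating separately the open set $U = \{ M \in \mathbb{R}^{n \times m} : \mathrm{rank}\, M = m \}$ of full-rank matrices and the singular locus $Z = \{ M : \det(M^\top M) = 0 \} = \{ M : \mathrm{rank}\, M \le m-1 \}$. On $U$ the density $\pi_{\mathrm{SVS}} = \det(M^\top M)^{-(n-m-1)/2}$ is a $C^\infty$, finite, strictly positive function, while $\pi_{\mathrm{SVS}} \equiv +\infty$ on $Z$. Since $n-m-1 \ge 1 > 0$, we have $\pi_{\mathrm{SVS}}(M) \to +\infty$ as $\mathrm{dist}(M, Z) \to 0$, so $\pi_{\mathrm{SVS}}$ is lower semicontinuous on all of $\mathbb{R}^{n \times m}$, and it is clearly not identically $+\infty$. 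Hence the first two conditions hold, and everything reduces to the spherical sub-mean-value inequality (the third condition).

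For that I would first show that $\pi_{\mathrm{SVS}}$ is in fact \emph{harmonic} on $U$, so that the third condition holds on $U$ by the $C^2$ criterion quoted above (Lemma~3.3.4 of \cite{Helms}). Set $g(M) = \det(M^\top M)$ and $\alpha = (n-m-1)/2$, so $\pi_{\mathrm{SVS}} = g^{-\alpha}$. By Jacobi's formula, $\partial g / \partial M_{ij} = 2 g\, P_{ij}$ where $P := M(M^\top M)^{-1}$, and therefore $\partial \pi_{\mathrm{SVS}} / \partial M_{ij} = -2\alpha\, \pi_{\mathrm{SVS}}\, P_{ij}$. Differentiating once more and summing over $i,j$,
\begin{align*}
\Delta \pi_{\mathrm{SVS}}
&= -2\alpha \sum_{i,j} \left( \frac{\partial \pi_{\mathrm{SVS}}}{\partial M_{ij}}\, P_{ij} + \pi_{\mathrm{SVS}}\, \frac{\partial P_{ij}}{\partial M_{ij}} \right) \\
&= -2\alpha\, \pi_{\mathrm{SVS}} \left( -2\alpha\, \| P \|_{\mathrm{F}}^2 + \sum_{i,j} \frac{\partial P_{ij}}{\partial M_{ij}} \right).
\end{align*}
Here $\| P \|_{\mathrm{F}}^2 = \mathrm{tr}[ P^\top P ] = \mathrm{tr}[ (M^\top M)^{-1} ]$, and a direct computation of the ``divergence'' of $P$, using $\partial (M^\top M)^{-1} / \partial M_{ij} = -(M^\top M)^{-1}\, [ \partial (M^\top M) / \partial M_{ij} ]\, (M^\top M)^{-1}$, gives $\sum_{i,j} \partial P_{ij} / \partial M_{ij} = (n-m-1)\, \mathrm{tr}[ (M^\top M)^{-1} ]$. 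Since $2\alpha = n-m-1$, the parenthesis vanishes, so $\Delta \pi_{\mathrm{SVS}} = 0$ on $U$. (When $m=1$ this is exactly the harmonicity of the Stein prior $\| \mu \|^{-(n-2)}$ on $\mathbb{R}^n \setminus \{ 0 \}$.) I expect this divergence identity — keeping careful track of the derivative of $(M^\top M)^{-1}$ — to be the only genuinely fiddly step, though it is purely mechanical.

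Finally I would remove the singular locus $Z$. Because $n-m \ge 2$, the variety $Z$ of $n \times m$ matrices of rank at most $m-1$ is relatively closed in $\mathbb{R}^{mn}$ and has codimension $n-m+1 \ge 3$, hence dimension at most $mn-3$, so $Z$ is a polar set. A function that is superharmonic on $\mathbb{R}^{mn} \setminus Z$ and locally bounded below near $Z$ — which holds here since $\pi_{\mathrm{SVS}} \to +\infty$ on $Z$ — extends uniquely to a superharmonic function on all of $\mathbb{R}^{mn}$ by lower-semicontinuous regularization, by the standard removable-singularity theorem for polar sets (see, e.g., \cite{Helms}); and since the $\liminf$ of $\pi_{\mathrm{SVS}}$ at every point of $Z$ equals $+\infty$, this extension coincides with $\pi_{\mathrm{SVS}}$. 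Thus the sub-mean-value inequality holds for every center and every radius in $\mathbb{R}^{mn}$, which completes the proof.
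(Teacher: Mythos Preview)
Your proof is correct and takes a genuinely different route from the paper. The paper regularizes the prior to $\pi^{(k)}(M) = \det(M^\top M + k^{-1}I_m)^{-(n-m-1)/2}$, computes directly that $\Delta \pi^{(k)} < 0$ on all of $\mathbb{R}^{n\times m}$ (the $\varepsilon$-term is what produces the strictly negative Laplacian), observes that $\pi^{(k)} \uparrow \pi_{\mathrm{SVS}}$ pointwise, and then invokes only the monotone-limit theorem for superharmonic functions (Helms, Theorem~3.4.8). By contrast, you go straight for harmonicity of $\pi_{\mathrm{SVS}}$ on the full-rank set $U$ --- a cleaner calculation, since the $\varepsilon$-terms are absent and the divergence identity $\sum_{i,j}\partial P_{ij}/\partial M_{ij} = (n-m-1)\,\mathrm{tr}[(M^\top M)^{-1}]$ falls out exactly --- and then remove the singular locus $Z$ by the removable-singularity theorem for polar sets, which needs a somewhat heavier piece of potential theory (polarity of a real variety of codimension $n-m+1\ge 3$, plus the extension theorem itself). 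Your computation of $\Delta \pi_{\mathrm{SVS}} = 0$ on $U$ is exactly the content of the paper's Theorem~2, which the authors obtain only \emph{after} Theorem~1 by sending $\varepsilon\to 0$ in their regularized inequality; so in effect you establish Theorems~1 and~2 simultaneously, at the cost of trading an elementary monotone-limit argument for a polar-set argument.
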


The proof is deferred to the Appendix.
From this proof, we obtain the following Theorem.

\begin{theorem}
	If $M$ has full rank, then the prior $\pi_{{\rm SVS}}$ satisfies $\Delta \pi_{{\rm SVS}} (M) = 0$.
\end{theorem}


Therefore, the superharmonicity of $\pi_{{\rm SVS}}$ is strongly concentrated 
in the same way as the Laplacian of the Stein prior becomes a Dirac delta function.

Another interesting point about this prior is that $\pi_{{\rm SVS}}$ is superharmonic column-wise:
$\pi_{{\rm SVS}}$ is superharmonic as a function of the $i$-th column of $M$ ($i=1,\ldots,m$).

\subsection{Minimaxity of Bayes estimators and Bayesian predictive densities based on $\pi_{{\rm SVS}}$}
We prove that the Bayes estimators and the Bayesian predictive densities based on $\pi_{{\rm SVS}}$ are minimax and dominate those based on the Jeffreys prior.

When $Y \sim {\rm N}_{n,m} (M,C,\Sigma)$, we define the marginal distribution of $Y$ with prior $\pi(M)$ by
\begin{equation*}
m_{\pi} (Y; C,\Sigma) = \int p(Y \mid M,C,\Sigma) \pi (M) {\rm d} M.
\end{equation*}
When $\pi = \pi_{{\rm SVS}}$, we denote the marginal distribution by $m_{{\rm SVS}}$.

\begin{lemma} \label{th_m_superharmonic}
	If $m_{\pi} (Y; C,\Sigma) < \infty$ for every $Y$ and $\pi$ is superharmonic, then $m_{\pi} (Y; C,\Sigma)$ is superharmonic.
\end{lemma}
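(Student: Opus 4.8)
The plan is to reduce the claim to the one-parameter case by conditioning. Recall from \eqref{vectorization} that writing $x = {\rm vec}(X^\top)$, we have $x \sim {\rm N}_{mn}({\rm vec}(M^\top), \Sigma \otimes C)$, so superharmonicity of $m_\pi(X;C,\Sigma)$ as a function of $X$ is by definition superharmonicity of $X \mapsto m_\pi(X;C,\Sigma)$ composed with ${\rm vec}$, i.e.\ of a function on $\mathbb{R}^{mn}$. Since $\Sigma \otimes C \succ 0$, a linear change of variables $x = (\Sigma \otimes C)^{1/2} z + {\rm vec}(M^\top)$ shows that, up to an affine reparametrization that preserves superharmonicity (superharmonicity is invariant under nonsingular affine maps of the domain, because such maps send harmonic functions to harmonic functions, and the mean-value characterization transfers accordingly), it suffices to treat the canonical case $C = I_n$, $\Sigma = I_m$. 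So I would first state that reduction and then work with $X \sim {\rm N}_{n,m}(M, I, I)$.

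In the canonical case the marginal is $m_\pi(X) = \int p(X \mid M) \pi(M)\,{\rm d}M$ where $p(X\mid M) = (2\pi)^{-nm/2}\,{\rm etr}\{-\tfrac12 (X-M)^\top(X-M)\}$ is the standard Gaussian density on $\mathbb{R}^{nm}$. The key fact is that the Gaussian kernel reproduces the mean-value inequality: for any fixed $M$, the function $X \mapsto p(X\mid M)$ is, up to the normalizing constant, $\phi(X - M)$ where $\phi$ is the standard heat kernel at time $1$, and convolution with a probability density on $\mathbb{R}^d$ preserves superharmonicity. Equivalently, and this is the cleanest route, I would invoke the characterization in condition (3) of the definition: for a sphere $S_{X,\delta}$ of radius $\delta$ centered at $X$,
\begin{equation*}
	\frac{1}{\Omega_{mn}\delta^{mn-1}} \int_{S_{X,\delta}} m_\pi(Z)\,{\rm d}s(Z) = \int \left( \frac{1}{\Omega_{mn}\delta^{mn-1}} \int_{S_{X,\delta}} p(Z\mid M)\,{\rm d}s(Z) \right) \pi(M)\,{\rm d}M,
\end{equation*}
where Fubini is justified because $m_\pi(X) < \infty$ everywhere by hypothesis and all integrands are nonnegative. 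Now for each fixed $M$ the inner spherical average of the Gaussian density $Z \mapsto p(Z\mid M)$ is at most $p(X\mid M)$: this is exactly superharmonicity of the Gaussian density, which follows from the computation $\Delta_X p(X\mid M) = (\|X - M\|^2 - nm)\, p(X\mid M)$ — but this is not globally nonpositive, so this naive step fails. The correct replacement is to swap roles and integrate over $M$ first.

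So the actual argument goes the other way: write $m_\pi(X) = \int \phi(X - M)\pi(M)\,{\rm d}M = (\phi * \pi)(X)$ as a convolution on $\mathbb{R}^{mn}$, where $\phi$ is the (fixed, probability) standard Gaussian density. Then the spherical average of $m_\pi$ at $X$ equals $\int \left[\int_{S_{X,\delta}} \phi(Z-M)\,{\rm d}s(Z)/(\Omega_{mn}\delta^{mn-1})\right]\pi(M)\,{\rm d}M = \int (\text{spherical average of }\phi\text{ at }X-M)\,\pi(M)\,{\rm d}M$. By translating, the spherical average of $\phi$ at the point $X-M$ is the value at $X$ of the spherical average (over a sphere of radius $\delta$ centered at $0$) of the translate $\phi(\cdot - M)$; more usefully, writing $A_\delta g$ for the operator taking a function to its spherical $\delta$-average about each point, we have $A_\delta(\phi * \pi) = (A_\delta \phi) * \pi$, and hence $A_\delta m_\pi = (A_\delta \phi) * \pi$. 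Now $A_\delta \phi \le \phi$ pointwise would again require superharmonicity of $\phi$, which is false; instead I use that $\phi$ is the time-$1$ heat kernel and $A_\delta$ applied to a heat-kernel evolution relates to a later time — concretely, the submean-value operator $A_\delta$ applied to a Gaussian spreads it, and $(A_\delta \phi)*\pi \le \phi * (\text{something})$ does not directly close either. The robust fix — and the step I expect to be the genuine obstacle — is instead to use condition (3) of superharmonicity \emph{for $\pi$}: for fixed $X$, the function $M \mapsto p(X\mid M) = \phi(X-M)$ is smooth, and I compute
\begin{equation*}
	\int_{S_{X,\delta}} m_\pi(Z)\,{\rm d}s(Z) = \int_{S_{X,\delta}}\!\! \int \phi(Z - M)\pi(M)\,{\rm d}M\,{\rm d}s(Z),
\end{equation*}
change variables $M \mapsto M + (Z - X)$ inside (Lebesgue measure is translation-invariant), obtaining $\int \phi(X - M)\big[\int_{S_{X,\delta}}\pi(M + Z - X)\,{\rm d}s(Z)\big]{\rm d}M = \int \phi(X-M)\big[\int_{S_{0,\delta}}\pi(M + W)\,{\rm d}s(W)\big]{\rm d}M$. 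The bracketed quantity is $\Omega_{mn}\delta^{mn-1}$ times the spherical $\delta$-average of $\pi$ about $M$, which by superharmonicity of $\pi$ (condition (3)) is $\le \Omega_{mn}\delta^{mn-1}\pi(M)$. Dividing by $\Omega_{mn}\delta^{mn-1}$ and using $\phi \ge 0$ gives the spherical average of $m_\pi$ at $X$ is $\le \int\phi(X-M)\pi(M)\,{\rm d}M = m_\pi(X)$, which is condition (3) for $m_\pi$. Lower semicontinuity and $m_\pi \not\equiv \infty$ are immediate: $m_\pi < \infty$ everywhere by hypothesis, it is everywhere finite and in fact continuous by dominated convergence (the Gaussian kernel is continuous and bounded, $\pi \ge 0$ integrable against it), so all three conditions hold. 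The only care needed throughout is the Fubini/translation-invariance juggling and the justification that the finite value $m_\pi < \infty$ lets us interchange the spherical integral with the integral over $M$ for nonnegative integrands; that bookkeeping is where the real work sits, the rest being a one-line consequence of superharmonicity of $\pi$.
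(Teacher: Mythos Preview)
Your final argument---shift the integration variable so that the sphere variable sits inside $\pi$, apply Fubini, then invoke the spherical mean-value inequality for $\pi$---is correct and is exactly the paper's proof. The paper writes it as the substitution $A=X-M$ (so $m_\pi(X)=\int\phi(A)\pi(X-A)\,{\rm d}A$) and then averages over $X\in S_{x,\delta}$; your substitution $M\mapsto M+(Z-X)$ is the same move.

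There is one genuine error, though. Your reduction to $C=I_n$, $\Sigma=I_m$ relies on the claim that ``superharmonicity is invariant under nonsingular affine maps of the domain, because such maps send harmonic functions to harmonic functions.'' Both assertions are false in dimension $\ge 2$: e.g.\ $f(x,y)=x^2-y^2$ is harmonic, but after the linear map $(x,y)\mapsto(2x,y)$ one gets $4x^2-y^2$ with Laplacian $6$. Only Euclidean similarities (orthogonal maps, translations, homotheties) preserve harmonicity, and $(\Sigma\otimes C)^{1/2}$ is generally none of these. So as written your reduction step does not stand.

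The fix is painless: simply drop the reduction. Your change-of-variables argument uses only that $p(X\mid M)=\phi(X-M)$ for some fixed nonnegative $\phi$, and this holds for arbitrary $C,\Sigma$. Run your last paragraph verbatim with $\phi(\cdot)=p(\cdot\mid O,C,\Sigma)$ and you recover the paper's proof for general covariances in one stroke. (You should also excise the two false starts about superharmonicity of the Gaussian density and about $A_\delta\phi$; they are not needed and the first is, as you note yourself, wrong.)
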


The proof is deferred to the Appendix.

In terms of estimation of $M$ from $Y$, the following result \citep{Stein74} is known.

\begin{lemma} \label{th_stein}
	If $m_{\pi}$ satisfies $\Delta m_{\pi} (Y; C,\Sigma) \leq 0$, then Bayes estimator $\hat{M}^{\pi}$ with prior $\pi$ is minimax under the Frobenius loss.
	Furthermore, Bayes estimator with prior $\pi$ dominates the maximum likelihood estimator unless $\pi$ is the uniform prior.
\end{lemma}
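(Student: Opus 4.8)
\medskip
\noindent\emph{Proof idea.}
The plan is to combine the representation of the Bayes estimator as a posterior mean with Stein's unbiased estimate of risk. Since the Frobenius loss is the ordinary squared-error loss on the $mn$ entries of $M$, the Bayes estimator is the posterior mean $\hat M^{\pi}(X)=\mathrm{E}[M\mid X]$. First I would differentiate $m_{\pi}(X;C,\Sigma)=\int \phi(X-M;C,\Sigma)\,\pi(M)\,\mathrm{d}M$ under the integral sign, using the Gaussian score identity $\nabla_X\phi(X-M;C,\Sigma)=-V^{-1}\mathrm{vec}(X-M)\,\phi(X-M;C,\Sigma)$, where $\phi(\,\cdot\,;C,\Sigma)=p(\,\cdot\mid O,C,\Sigma)$ and $V$ is the covariance matrix of $\mathrm{vec}(X)$ (the Kronecker product formed from $\Sigma$ and $C$). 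This gives
\[
  \mathrm{vec}\bigl(\hat M^{\pi}(X)\bigr)=\mathrm{vec}(X)+V\,\nabla_X\log m_{\pi}(X;C,\Sigma),
\]
i.e.\ $\hat M^{\pi}=X+g(X)$ with $g=V\,\nabla\log m_{\pi}$, the matrix analogue of the James-Stein correction.

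Next I would feed this into Stein's identity, $\mathrm{E}_M[(\mathrm{vec}(X)-\mathrm{vec}(M))^{\top}g(X)]=\mathrm{E}_M[\mathrm{tr}(V\,Dg(X))]$ with $Dg$ the Jacobian of $g$, to evaluate the quadratic risk. Expanding $\mathrm{E}_M\|X+g(X)-M\|_{\mathrm F}^{2}$ yields the unbiased risk estimate $\mathrm{tr}(V)+2\,\mathrm{tr}(V\,Dg)+\|g\|^{2}$; using $g=V\nabla\log m_{\pi}$ one has $\mathrm{tr}(V\,Dg)=\mathrm{tr}(V^{2}\,\mathrm{Hess}\log m_{\pi})$, and substituting $\mathrm{Hess}\log m_{\pi}=\mathrm{Hess}\,m_{\pi}/m_{\pi}-(\nabla m_{\pi})(\nabla m_{\pi})^{\top}/m_{\pi}^{2}$ and simplifying I would obtain
\[
  R_{\mathrm F}(M,\hat M^{\pi})-R_{\mathrm F}(M,X)=\mathrm{E}_M\!\left[\frac{2\,\mathrm{tr}\bigl(V^{2}\,\mathrm{Hess}\,m_{\pi}(X;C,\Sigma)\bigr)}{m_{\pi}(X;C,\Sigma)}-(\nabla\log m_{\pi})^{\top}V^{2}(\nabla\log m_{\pi})\right],
\]
where $R_{\mathrm F}$ is the Frobenius risk and $R_{\mathrm F}(M,X)=\mathrm{tr}(V)$ is the constant risk of the maximum likelihood estimator $\hat M_{\mathrm{MLE}}=X$. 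The second term in the bracket is always $\le 0$, and its expectation is strictly negative unless $\nabla\log m_{\pi}\equiv 0$, i.e.\ unless $m_{\pi}$ is constant, which happens exactly when $\pi$ is the uniform prior. The first term is made nonpositive by the hypothesis $\Delta m_{\pi}(X;C,\Sigma)\le 0$ (for $C=I_n,\Sigma=I_m$ the trace appearing there is literally $\Delta m_{\pi}$). Hence $R_{\mathrm F}(M,\hat M^{\pi})\le R_{\mathrm F}(M,X)$ for every $M$, strictly unless $\pi$ is uniform.

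It then remains to record that $\hat M_{\mathrm{MLE}}=X$ has constant Frobenius risk $\mathrm{tr}(V)$ and is minimax --- for instance because this constant equals the limit, as $\tau\to\infty$, of the Bayes risks of the conjugate normal priors with variances of order $\tau^{2}$. Since any estimator whose risk nowhere exceeds that of a minimax estimator is itself minimax, $\hat M^{\pi}$ is minimax; combined with the previous paragraph it strictly dominates $X$ whenever $\pi$ is not uniform, which is the assertion.

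The step I expect to be the main obstacle is making the two interchanges rigorous: differentiation under the integral (with Fubini) in the posterior-mean formula, and the integration by parts behind Stein's identity. These need $m_{\pi}$ to be finite everywhere --- the standing hypothesis, which also forces $m_{\pi}\in C^{\infty}$ since it is the convolution of $\pi$ with the Schwartz function $\phi$ --- together with enough integrability of $g$ and $Dg$ and the vanishing of the Gaussian boundary terms at infinity, all of which must be read off from the rapid decay of $\phi$ and its derivatives. A subsidiary subtlety is that for general $C,\Sigma$ the relevant condition is superharmonicity of $m_{\pi}$ in the coordinates that standardize $V$ (equivalently $\mathrm{tr}(V^{2}\mathrm{Hess}\,m_{\pi})\le 0$ above), which reduces to the ordinary Laplacian only when $C\propto I$ and $\Sigma\propto I$.
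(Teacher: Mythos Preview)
Your approach is correct and is essentially the same as the paper's. The paper does not give a self-contained proof of this lemma; it merely records the Stein risk-difference identity
\[
\mathrm{E}_{M}\!\left[\|\hat M^{\mathrm{MLE}}-M\|_{\mathrm F}^{2}\right]-\mathrm{E}_{M}\!\left[\|\hat M^{\pi}-M\|_{\mathrm F}^{2}\right]
=\mathrm{E}_{M}\!\left[\|\nabla\log m_{\pi}(X)\|^{2}-2\,\frac{\Delta m_{\pi}(X)}{m_{\pi}(X)}\right]
\]
and refers to \cite{Stein74}. Your derivation---Tweedie/Brown's formula $\hat M^{\pi}=X+V\nabla\log m_{\pi}$ followed by Stein's unbiased risk estimate and the algebraic substitution $\mathrm{Hess}\log m_{\pi}=\mathrm{Hess}\,m_{\pi}/m_{\pi}-(\nabla m_{\pi})(\nabla m_{\pi})^{\top}/m_{\pi}^{2}$---is exactly how that identity is obtained, and your conclusion (nonpositivity of each term, strict domination unless $\nabla\log m_{\pi}\equiv 0$, minimaxity of $X$ via the limit of conjugate-prior Bayes risks) matches Stein's argument. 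Your observation that for general $C,\Sigma$ the relevant quantity is $\mathrm{tr}(V^{2}\,\mathrm{Hess}\,m_{\pi})$ rather than the plain Laplacian is a valid refinement: the paper's displayed identity is written for $V=I$, which is the only case in which it is subsequently applied.
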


The lemma above is obtained from the expression
\begin{align}
E_{M} & \left( \| \hat{M}^{{\rm MLE}} - M \|_{{\rm F}}^2 \right)
- E_{M} \left( \| \hat{M}^{\pi} - M \|_{{\rm F}}^2 \right)
= E_{M} \left\{ \| \nabla \log m_{\pi} (Y) \|^2 - 2 \frac{\Delta m_{\pi} (Y)}{m_{\pi} (Y)} \right\} \label{est_risk_diff}
\end{align}
of the risk difference between the maximum likelihood estimator and the Bayes estimator.

In terms of prediction, we obtain the following result when $\widetilde{\Sigma} \otimes \widetilde{C}$ is proportional to $\Sigma \otimes C$.
This corresponds to the setting of \cite{Komaki01} and \cite{George06}.
The general covariance case is considered in Section 2.5.
We assume $C = v_1 I_n, \widetilde{C} = v_2 I_n, \Sigma = \widetilde{\Sigma} = I_m$ without loss of generality.
Let $v_0 = (v_1 v_2)/(v_1+v_2) < v_1$.
We write the Bayesian predictive density based on the uniform prior $\pi_{{\rm I}}(M) \equiv 1$ by $\hat{p}_{{\rm I}} (\widetilde{Y} \mid Y)$.

\begin{lemma} \label{lemma_helms} \citep[Lemma 3.4.4]{Helms}
	If $f$ is superharmonic, then $L(f : x, \delta)=\int_{S_{x,\delta}} f(z) {\rm d} s(z) / (\Omega_d \delta^{d-1})$ is a decreasing function of $\delta$.
\end{lemma}

\begin{proposition} \label{th_george}
	If $\pi$ is superharmonic, then $\hat{p}_{\pi} (\widetilde{Y} \mid Y)$ is minimax under the Kullback--Leibler risk $R_{{\rm KL}}$.
	Furthermore, $\hat{p}_{\pi} (\widetilde{Y} \mid Y)$ dominates $\hat{p}_{{\rm I}} (\widetilde{Y} \mid Y)$ unless $\pi$ is the uniform prior.
\end{proposition}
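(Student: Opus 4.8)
The plan is to reduce the statement to the already-known vector-variate result. By \eqref{vectorization} together with the assumed form $C = v_1 I_n$, $\widetilde{C} = v_2 I_n$, $\Sigma = \widetilde{\Sigma} = I_m$, the observation and the future observation become $\mathrm{vec}(Y^{\top}) \sim \mathrm{N}_{mn}(\mathrm{vec}(M^{\top}), v_1 I_{mn})$ and $\mathrm{vec}(\widetilde{Y}^{\top}) \sim \mathrm{N}_{mn}(\mathrm{vec}(M^{\top}), v_2 I_{mn})$; modulo a fixed permutation of coordinates this is exactly the vector-variate Gaussian prediction problem in dimension $mn$. Under the definition of superharmonicity for functions on $\mathbb{R}^{n \times m}$ adopted above, a superharmonic prior on $M$ corresponds to a superharmonic prior on the vectorised parameter, and the Kullback--Leibler loss, the risk $R_{{\rm KL}}$ and the Bayesian predictive density $\hat{p}_\pi$ are all preserved by this identification. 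Since $n - m \ge 2$ and $m \ge 1$ force $mn \ge 3$, the results of \cite{Komaki01} and \cite{George06} apply verbatim: the Bayesian predictive density based on a superharmonic prior is minimax under $R_{{\rm KL}}$ and dominates the one based on the uniform prior, strictly unless the prior is uniform --- which is the assertion of Proposition \ref{th_george}.

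If one prefers a self-contained argument, I would reprove the vector-variate result along the lines of \cite{George06}. Writing $m_\pi(z; v)$ for the marginal density of $\mathrm{N}_{mn}(\theta, v I_{mn})$ under $\pi$ and setting $\theta = \mathrm{vec}(M^{\top})$, one first expresses the Bayesian predictive density as a ratio of marginals evaluated at different noise levels, and then derives an identity of the shape
\begin{equation*}
R_{{\rm KL}}(M, \hat{p}_{{\rm I}}) - R_{{\rm KL}}(M, \hat{p}_\pi) = \int_{v_0}^{v_1} \frac{1}{2 v^2} \, \mathrm{E}_{\theta, v}\!\left[ \| \nabla \log m_\pi(Z; v) \|^2 - 2 \, \frac{\Delta m_\pi(Z; v)}{m_\pi(Z; v)} \right] \mathrm{d} v ,
\end{equation*}
so that the predictive risk difference is the integral over the noise scale $v \in [v_0, v_1]$ of (a multiple of) the estimation risk difference \eqref{est_risk_diff}. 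By Lemma \ref{th_m_superharmonic}, superharmonicity of $\pi$ (together with finiteness of $m_\pi$) makes $m_\pi(\cdot; v)$ superharmonic, and since $m_\pi(\cdot; v)$ is $C^\infty$ as a Gaussian convolution, $\Delta m_\pi(\cdot; v) \le 0$ pointwise; hence every integrand is nonnegative and $R_{{\rm KL}}(M, \hat{p}_{{\rm I}}) \ge R_{{\rm KL}}(M, \hat{p}_\pi)$ for all $M$. Since $\hat{p}_{{\rm I}}$ has constant $R_{{\rm KL}}$-risk and is minimax, $\hat{p}_\pi$ is minimax as well; and the inequality is strict unless the integrand vanishes identically, which forces $\nabla \log m_\pi \equiv 0$, hence $m_\pi$ constant, hence $\pi$ uniform.

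The main obstacle is the step from superharmonicity of $\pi$ to the pointwise Laplacian inequality $\Delta m_\pi(\cdot; v) \le 0$ for every $v \in [v_0, v_1]$. This requires the standing finiteness condition $m_\pi(X; C, \Sigma) < \infty$ (implicit here, and to be verified separately for $\pi_{{\rm SVS}}$) so that Lemma \ref{th_m_superharmonic} applies, and it uses smoothness of $m_\pi$ so that the $C^2$ characterization of superharmonicity is available. The remaining ingredients --- the ratio-of-marginals form of $\hat{p}_\pi$, the interchange of expectation and differentiation, and the Fubini and dominated-convergence steps yielding the displayed integral identity --- are routine once the relevant Gaussian tail bounds are in place.
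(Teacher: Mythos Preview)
Your argument is correct, but it follows a different line from the paper. The paper starts from the same risk-difference identity (Lemma~2 of \cite{George06}), yet instead of your integral-over-noise-scale formula it splits
\[
E_{M,v_0}\log m_\pi(Z;v_0) - E_{M,v_1}\log m_\pi(Z;v_1)
\]
into two additive pieces by inserting $\pm E_{M,v_0}\log m_\pi(Z;v_1)$, and then handles each piece with the mean-value inequality for superharmonic functions (Helms, Lemma~3.4.4): once applied to $\pi$ to compare $m_\pi(\cdot;v_0)$ with $m_\pi(\cdot;v_1)$, and once applied to $\log m_\pi(\cdot;v_1)$ (superharmonic via Lemma~\ref{th_m_superharmonic} and the fact that $\log$ of a superharmonic function is superharmonic). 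This avoids ever writing $\Delta m_\pi$ and is the reason the paper can remark that the proposition is ``without an assumption concerning twice differentiability of $\pi$'' and hence a slight generalization of \cite{George06}. Your self-contained route reaches the same conclusion by the complementary observation that $m_\pi$, being a Gaussian convolution, is $C^\infty$ regardless of the regularity of $\pi$, so the $C^2$ characterization applies to $m_\pi$ even if not to $\pi$; and it has the pleasant feature of exhibiting the predictive risk gap as an integral of the estimation risk gap \eqref{est_risk_diff}. One caution on your first paragraph: citing \cite{George06} ``verbatim'' is not quite enough here, precisely because their stated hypotheses are slightly stronger than mere superharmonicity of $\pi$; your second, self-contained paragraph is what actually carries the proof. (Also check the prefactor in your displayed integral identity: with the heat-equation derivation the factor is $\tfrac{1}{2}$, not $\tfrac{1}{2v^2}$, though this does not affect the sign argument.)
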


The proof is deferred to the Appendix.
Proposition $\ref{th_george}$ does not require an assumption concerning twice differentiablility of $\pi$ and is
a slight generalization of the results in \cite{George06}.

From Theorem $\ref{th_piSVS_superharmonic}$ and Lemma $\ref{th_m_superharmonic}$, $m_{{\rm SVS}}$ is superharmonic.
Also, from $\eqref{marginal}$, $m_{{\rm SVS}}$ is a $C^2$ function.
Therefore, $\pi_{{\rm SVS}}$ satisfies the conditions of Lemma $\ref{th_stein}$ and Proposition $\ref{th_george}$.
By combining these results, we obtain the following.

\begin{theorem}
	The Bayes estimator based on the prior $\pi_{{\rm SVS}}$
	is minimax and dominates the maximum likelihood estimator under the Frobenius risk.
	The Bayesian predictive density $\hat{p}_{{\rm SVS}} (\widetilde{Y} \mid Y)$ based on the prior $\pi_{{\rm SVS}}$
	is minimax and dominates $\hat{p}_{{\rm I}} (\widetilde{Y} \mid Y)$ under the Kullback--Leibler risk $R_{{\rm KL}}$.
\end{theorem}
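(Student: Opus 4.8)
The plan is to obtain the theorem as a corollary of the structural facts already established: Theorem~\ref{th_piSVS_superharmonic} (superharmonicity of $\pi_{\mathrm{SVS}}$), Lemma~\ref{th_m_superharmonic} (a finite marginal of a superharmonic prior is superharmonic), Lemma~\ref{th_stein} (Stein's minimaxity criterion for Bayes estimators), and Proposition~\ref{th_george} (superharmonic priors yield minimax Bayesian predictive densities dominating $\hat p_{\mathrm{I}}$). The only input not yet on the table is the finiteness and $C^2$-regularity of the marginal $m_{\mathrm{SVS}}(X;C,\Sigma)$, so I would dispatch that first and then simply quote.

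\emph{Finiteness and smoothness of $m_{\mathrm{SVS}}$.} The concern is that $\pi_{\mathrm{SVS}}(M)=\prod_{i=1}^m\sigma_i(M)^{-(n-m-1)}$ diverges along the rank-deficient locus. The clean route is to invoke the explicit closed form \eqref{marginal} for $m_{\mathrm{SVS}}$ derived in the next subsection, which simultaneously gives $m_{\mathrm{SVS}}(X;C,\Sigma)<\infty$ for all $X$, shows $m_{\mathrm{SVS}}\in C^2$, and shows that the posterior under $\pi_{\mathrm{SVS}}$ has finite first moment, so that the Bayes estimator $\hat M^{\pi_{\mathrm{SVS}}}=\mathrm{E}[M\mid X]$ is well defined. (Heuristically, the variety of $n\times m$ matrices of rank $\le m-1$ has codimension $n-m+1$ and its distance function is $\sigma_m(M)$, so near it the offending factor contributes $\int_0^1 r^{-(n-m-1)}\,r^{(n-m+1)-1}\,\mathrm{d}r=\int_0^1 r\,\mathrm{d}r<\infty$; the hypothesis $n-m\ge2$ is exactly what makes this and the analogous estimates at the deeper rank strata converge, while the Gaussian factor $p(X\mid M,C,\Sigma)$ handles the tails.)

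\emph{Assembling the two claims.} By Theorem~\ref{th_piSVS_superharmonic}, $\pi_{\mathrm{SVS}}$ is superharmonic; with the finiteness above, Lemma~\ref{th_m_superharmonic} makes $m_{\mathrm{SVS}}$ superharmonic, and since it is $C^2$ this is equivalent to $\Delta m_{\mathrm{SVS}}(X;C,\Sigma)\le0$ for all $X$. Part~1 is then immediate from Lemma~\ref{th_stein}: the inequality $\Delta m_{\mathrm{SVS}}\le0$ renders the estimation risk difference \eqref{est_risk_diff} nonnegative, giving minimaxity of $\hat M^{\pi_{\mathrm{SVS}}}$ under the Frobenius risk and strict domination of the maximum likelihood estimator because $\pi_{\mathrm{SVS}}$ is not the uniform prior. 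Part~2 is immediate from Proposition~\ref{th_george} with $\pi=\pi_{\mathrm{SVS}}$: it uses only the superharmonicity of $\pi_{\mathrm{SVS}}$ (Theorem~\ref{th_piSVS_superharmonic}) and yields both minimaxity of $\hat p_{\mathrm{SVS}}(\widetilde Y\mid Y)$ under $R_{\mathrm{KL}}$ and domination of $\hat p_{\mathrm{I}}(\widetilde Y\mid Y)$, strict since $\pi_{\mathrm{SVS}}\ne\pi_{\mathrm{I}}$.

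The genuine work is confined to the first step: $\pi_{\mathrm{SVS}}$ is an improper prior with a real singularity along the rank-deficient matrices, and one must verify that this singularity is tame enough for the marginal to remain finite and $C^2$. Once that is secured everything else is a direct invocation of Lemma~\ref{th_m_superharmonic}, Lemma~\ref{th_stein} and Proposition~\ref{th_george}, the constant-risk (best-invariant) minimaxity of $\hat p_{\mathrm{I}}$ needed for part~2 being already subsumed in Proposition~\ref{th_george}.
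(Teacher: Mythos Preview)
Your proposal is correct and follows essentially the same route as the paper: the paper likewise notes that Theorem~\ref{th_piSVS_superharmonic} together with Lemma~\ref{th_m_superharmonic} yields superharmonicity of $m_{\mathrm{SVS}}$, invokes the closed form \eqref{marginal} from the next subsection to secure finiteness and $C^2$-regularity, and then simply cites Lemma~\ref{th_stein} and Proposition~\ref{th_george} for parts~1 and~2 respectively. Your additional codimension heuristic and the remark on well-definedness of the posterior mean are not in the paper but are harmless embellishments.
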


Since $\pi_{{\rm SVS}}$ shrinks each singular value separately, 
the risk reduction of $\pi_{{\rm SVS}}$ is larger when the true value of $M$ has smaller singular values.
A remarkable point is that $\pi_{{\rm SVS}}$ works well even when only some of the singular values are small.
In particular, $\pi_{{\rm SVS}}$ works well when $M$ has low rank.
We confirm these facts by numerical experiments in Section 3.

\subsection{Bayesian predictive density based on $\pi_{{\rm SVS}}$}
We provide the analytical form of the Bayesian predictive densities based on $\pi_{{\rm SVS}}$.
Here, we consider the case where $ \widetilde{\Sigma} \otimes \widetilde{C}$ is proportional to $\Sigma \otimes C$
and assume $C = \widetilde{C} = I_n$ and $\Sigma = \widetilde{\Sigma} = I_m$ without loss of generality.

\begin{theorem} \label{th_predictive_density}
	The Bayesian predictive density based on the prior $\pi_{{\rm SVS}}$ is
	\begin{align}
	\hat{p}_{{\rm SVS}} (\widetilde{Y} \mid Y) & =
	(2 \pi)^{-m n / 2} {\rm etr} \left\{ -\frac{1}{4} (\widetilde{Y}-Y)^{\top} 
	(\widetilde{Y}-Y) -\frac{1}{4} Z^{\top} Z + \frac{1}{2} Y^{\top} Y \right\} \nonumber \\
	&  \quad \times \frac{2^{-m (m+1)/2} {}_1 F_1 
		\left( \frac{m+1}{2}; \frac{n}{2}; \frac{1}{4} Z^{\top} Z \right)}
	{{}_1 F_1 \left( \frac{m+1}{2}; \frac{n}{2}; \frac{1}{2} Y^{\top} Y \right)}. \label{SVshr_density}
	\end{align}
	Here, $Z=Y+\widetilde{Y}$ and ${}_p F_q$ is the hypergeometric function of matrix argument \citep[p.~34]{Gupta}
	defined by
	\begin{equation*}
	{}_p F_q (a_1, \ldots, a_p; b_1, \ldots, b_q; S) = \sum_{k=0}^{\infty} \sum_{\kappa \vdash k} \frac{(a_1)_{\kappa} \cdots (a_p)_{\kappa}}{(b_1)_{\kappa} \cdots (b_q)_{\kappa}} \frac{C_{\kappa} (S)}{k!},
	\end{equation*}
	where $a_1, \ldots, a_p, b_1, \ldots, b_q$ are arbitrary complex numbers, $S$ is a $p \times p$ complex symmetric matrix, $\sum_{\kappa \vdash k}$ denotes summation over all partitions $\kappa$ of $k$, $(a)_{\kappa}$ is the generalized Pochhammer symbol, and $C_{\kappa} (S)$ denotes the zonal polynomial.
\end{theorem}

The proof is deferred to the Appendix.

\subsection{General covariance case}
Thus far, we assumed that $\widetilde{\Sigma} \otimes \widetilde{C}$ is proportional to $\Sigma \otimes C$.
However, this assumption does not hold in regression problems \citep{Kobayashi,George08}.
Here, we consider the general covariance case.
We define $\Sigma_1 = \left\{ (\Sigma \otimes C)^{-1}+(\widetilde{\Sigma} \otimes \widetilde{C})^{-1} \right\}^{-1},$
$\Sigma_2 = \Sigma \otimes C$ and write the diagonalization of $\Sigma_1^{1/2} \Sigma_2^{-1} \Sigma_1^{1/2}$ as
$\Sigma_1^{1/2} \Sigma_2^{-1} \Sigma_1^{1/2} = U^{\top} \Lambda U$,
where $U$ is an orthogonal matrix and $\Lambda$ is a diagonal matrix.
Let $A^{*} = \Sigma_1^{1/2} U^{\top} (\Lambda^{-1} - I)^{1/2}$.
From Theorem 3.2 of \cite{Kobayashi}, we obtain the following.

\begin{theorem} \label{th_koba}
	If $\pi \left[ {\rm vec}^{-1} \left\{ A^{*} {\rm vec} (M) \right\} \right]$ is superharmonic as a function of $M$, 
	$\hat{p}_{\pi} (\widetilde{Y} \mid Y)$ dominates $\hat{p}_{{\rm I}} (\widetilde{Y} \mid Y)$ under $R_{{\rm KL}}$.
\end{theorem}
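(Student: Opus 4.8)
The plan is to reduce the matrix-variate prediction problem to the vector-variate one treated by \cite{Kobayashi} and then invoke their Theorem 3.2. First I would vectorize: writing $y = {\rm vec}(Y^{\top})$, $\widetilde{y} = {\rm vec}(\widetilde{Y}^{\top})$ and $\mu = {\rm vec}(M^{\top})$, the relation \eqref{vectorization} turns the problem into the vector-variate Normal prediction problem $y \sim {\rm N}_{mn}(\mu, \Sigma \otimes C)$, $\widetilde{y} \sim {\rm N}_{mn}(\mu, \widetilde{\Sigma} \otimes \widetilde{C})$ of dimension $d = mn$. Because ${\rm vec}$ is a linear bijection with unit Jacobian, the Kullback--Leibler loss and the risk $R_{{\rm KL}}$ are unaffected by this relabeling, the Bayesian predictive density $\hat{p}_{\pi}(\widetilde{Y}|Y)$ is carried to the Bayesian predictive density for the pushforward of $\pi$, and the uniform prior $\pi_{{\rm I}}(M)=1$ is carried to the uniform prior on $\mu$. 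Hence it suffices to prove the dominance after vectorization.

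Next I would match notation. Specializing Theorem 3.2 of \cite{Kobayashi} to $d = mn$ with observation covariance $\Sigma \otimes C$ and future covariance $\widetilde{\Sigma} \otimes \widetilde{C}$, their construction produces exactly the matrices written just before the statement: the posterior-type covariance $\Sigma_1 = \{(\Sigma \otimes C)^{-1} + (\widetilde{\Sigma} \otimes \widetilde{C})^{-1}\}^{-1}$, the observation covariance $\Sigma_2 = \Sigma \otimes C$, the diagonalization $\Sigma_1^{1/2} \Sigma_2^{-1} \Sigma_1^{1/2} = U^{\top} \Lambda U$, and the transformation $A^{*} = \Sigma_1^{1/2} U^{\top} (\Lambda^{-1} - I)^{1/2}$. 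Therefore the hypothesis that $\pi\{A^{*} {\rm vec}(M)\}$ is superharmonic as a function of ${\rm vec}(M)$ is precisely the hypothesis of their Theorem 3.2, and that theorem then yields that $\hat{p}_{\pi}$ dominates $\hat{p}_{{\rm I}}$ under $R_{{\rm KL}}$, which is the assertion.

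The argument is thus essentially bookkeeping; the hard part will be not the mathematics but keeping the conventions straight. In particular I must be careful about (i) whether one vectorizes $M$ or $M^{\top}$, and correspondingly whether the Kronecker ordering is $\Sigma \otimes C$ or $C \otimes \Sigma$, so that the composition of \eqref{vectorization} with the definitions of $\Sigma_1$ and $\Sigma_2$ is internally consistent; and (ii) checking the finiteness of the marginal of $Y$ under $\pi$ that is implicitly required for the decomposition in \cite{Kobayashi}, which for $\pi = \pi_{{\rm SVS}}$ follows from the Wishart computation behind \eqref{marginal}. Once the identification of $A^{*}$ with Kobayashi's transformation is pinned down, the conclusion is immediate.
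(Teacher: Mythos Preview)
Your proposal is correct and matches the paper's own treatment: the paper does not give a separate proof but simply states the theorem as an immediate consequence of Theorem~3.2 of \cite{Kobayashi} after the vectorization \eqref{vectorization}, which is exactly the reduction you describe. Your additional remarks on the ${\rm vec}$ versus ${\rm vec}^{\top}$ convention and on finiteness of the marginal are sensible caveats but go beyond what the paper itself spells out.
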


We can construct a prior that satisfies the condition of Theorem $\ref{th_koba}$ by using $\pi_{{\rm SVS}}$:
\begin{equation}
\pi(M) = \pi_{{\rm SVS}} \left[ {\rm vec}^{-1} \left\{ (A^{*})^{-1} {\rm vec} (M) \right\} \right]. \label{prior_koba}
\end{equation}
The analytical form of the Bayesian predictive densities based on the prior $\eqref{prior_koba}$ is unknown.
We conjecture that some extended generalized Laguerre polynomial of matrix argument is necessary to obtain this form.

In multivariate linear regression, the regression coefficient matrix $B$ often has low rank \citep{Reinsel}.
The prior $\eqref{prior_koba}$ works particularly well in such situations.
We confirm this by numerical experiments in Section 3.

\section{Numerical results}
In this section, we show numerical results on Bayes estimation and Bayesian prediction with singular value shrinkage priors.
We compare the singular value shrinkage prior to the Jeffreys and the Stein priors.
Here, the Jeffreys prior coincides with the uniform prior and the Stein prior is $\pi_{{\rm S}} (M) = \| M \|_{\mathrm{F}}^{-(n m -2)}$, 
where $\| M \|_{\mathrm{F}}$ denotes the Frobenius norm of $M$. We note that$\| M \|_{\mathrm{F}}^2 = \sum_{i=1}^m \sigma_i^2$, 
where $\sigma_i$ is the $i$th singular value of $M$. 
All the computations took less than 5 seconds on a laptop computer.

The hypergeometric function of matrix argument ${}_1 F_1$ is calculated as follows.
From Theorem 3.5.6 in \cite{Gupta},
\begin{align*}
{}_1 F_1 \left( \frac{m+1}{2}; \frac{n}{2}; \frac{1}{2} Y^{\top} Y \right)
&= 
\frac{2^{\frac{m(n-m-1)}{2}} \Gamma_m \left( \frac{n}{2} \right)}
{\Gamma_m \left( \frac{m+1}{2} \right)} {\rm etr} \left( \frac{1}{2} Y^{\top} Y \right)
E \left\{ (\det S)^{-(n-m-1)/2} \right\},
\end{align*}
where $S \sim W_m (n, I_m, Y^{\top} Y)$, and we take
\begin{align*}
E \left\{ (\det S)^{-(n-m-1)/2} \right\} & \approx \frac{1}{N} \sum_{i=1}^N (\det M_i^{\top} M_i)^{-(n-m-1)/2},
\end{align*}
where $M_1, \ldots, M_N$ are independent samples from ${\rm N}_{n,m} (Y, I_n, I_m)$, with $N = 10^4$.

First, we investigate Bayes estimation. 
The estimation of $M$ from $Y \sim {\rm N}_{n,m} (M, I_n, I_m)$ is considered.
From \cite{Stein74}, a Bayes estimator with prior $\pi(M)$ is
\begin{equation*}
\left\{ \widehat{M}^{\pi} (Y) \right\}_{i j} = Y_{i j} + \frac{\partial}{\partial Y_{i j}} \log m_{\pi} (Y),
\end{equation*}
where $m_{\pi} (Y)$ is the marginal distribution of $Y$ with prior $\pi(M)$.
When $\pi = \pi_{{\rm SVS}}$, by $\eqref{marginal}$,
\begin{equation*}
\frac{\partial}{\partial Y_{i j}} \log m_{{\rm SVS}} (Y) = -Y_{i j} + \frac{\partial}{\partial Y_{i j}} \left\{ \log {}_1 F_1 \left( \frac{m+1}{2}; \frac{n}{2}; \frac{1}{2} Y^{\top} Y \right) \right\}.
\end{equation*}
We approximated the partial differentiation of $\log {}_1 F_1$ by finite differencing.

We compare the risk functions of the Bayes estimator with the prior $\pi_{{\rm SVS}}$
to the Jeffreys prior and the Stein prior.
The second estimator coincides with the maximum likelihood estimator.
We sampled $Y$ $10^4$ times and approximated the risk by the sample mean of the Frobenius loss.

Figure $\ref{sv_shr_fig}$ (a) shows the risk functions for $m=3$, $n=5$, $\sigma_1=20$ and $\sigma_3=0$.
The singular value shrinkage prior performs better than the Jeffreys prior, and the risk reduction increases as $\sigma_2$ decreases.
The Stein prior does not perform well because $\| M \|_{\rm F}$ is not small, even when $\sigma_2=0$.
Figure $\ref{sv_shr_fig}$ (b) shows the risk functions for $m=3$, $n=5$ and $\sigma_2=\sigma_3=0$.
Though the Stein prior performs best when $\sigma_1$ is small, 
its risk becomes almost the same as that of the Jeffreys prior as $\sigma_1$ increases.
On the other hand, the singular value shrinkage prior performs better than the Jeffreys prior regardless of the value of $\sigma_1$.
This is because the singular value shrinkage prior shrinks $\sigma_1, \sigma_2$ and $\sigma_3$ separately
and the Stein prior shrinks the Frobenius norm $\| M \|_{\rm F}^2 = \sigma_1^2+\sigma_2^2+ \sigma_3^2$.

Next, we compare the risk functions of the Bayesian predictive densities based on the singular value shrinkage prior to the Jeffreys prior and the Stein prior.
We sampled $(Y, \widetilde{Y})$ $10^4$ times and approximated the Kullback--Leibler risk by the sample mean of $\log \widetilde{p}(\widetilde{Y} \mid M) - \log \hat{p}_{\pi} (\widetilde{Y} \mid Y)$.

Figure $\ref{sv_shr_fig}$ (c) and (d) show the risk functions for $m=3$, $n=5$, $C=\widetilde{C}=I_n$, $\Sigma=\widetilde{\Sigma}=I_m$.
The performance of singular value shrinkage prior is qualitatively the same as in the estimation case.

Figure $\ref{sv_shr_fig}$ (e) and (f) show the risk functions for $m=10$, $n=20$, $\Sigma=\widetilde{\Sigma}=I_m$, $\sigma_1=20$, $\sigma_2=10$ and $\sigma_4=\cdots=\sigma_{10}=0$.
In (f), $C \neq \widetilde{C}$ and we used the singular value shrinkage prior depending on the future covariance $\eqref{prior_koba}$.
Singular value shrinkage priors work well for low rank matrices, even when $C \neq \widetilde{C}$.

Finally, we consider the unknown variance case.
We assume $Y_i \sim {\rm N}_{n,m} (M, \sigma^2 Q, I_m) \ (i=1,\ldots,T)$ and $\widetilde{Y} \sim {\rm N}_{n,m} (M, \sigma^2 \widetilde{Q}, I_m)$,
where $T \geq 2$, $\sigma^2$ is unknown and $Q$ and $\widetilde{Q}$ are known.
This corresponds to multivariate linear regression with independent residuals.
As a generalization of the result of \cite{Kato}, the Bayesian predictive density based on the prior $\pi(M,\sigma) = \pi_{{\rm SVS}} (M) \sigma^{-1}$ is expected to dominate that based on the right invariant prior $\pi(M,\sigma) = \sigma^{-1}$, which is minimax.
We verify this numerically. The predictive density is
\begin{equation*}
\hat{p}_{\pi} (\widetilde{Y} \mid Y_1, \ldots, Y_T) = \frac{m_{\pi} (Z, \widetilde{s}^2)}{m_{\pi} (\overline{Y}, s^2)} \frac{m_{\pi} (\overline{Y}, s^2)}{m_{\pi} (Y_1,\ldots, Y_T)} \frac{m_{\pi} (Y_1,\ldots, Y_T, Z)}{m_{\pi} (Z, \widetilde{s}^2)},
\end{equation*}
where 
\begin{align*}
\overline{Y} &=T^{-1} \sum_{i=1}^T Y_i, \quad Z=(T Q^{-1} + \widetilde{Q}^{-1})^{-1} (T Q^{-1} \overline{Y} + \widetilde{Q}^{-1} \widetilde{Y}), \\
s^2 &= \sum_{i=1}^T \| Q^{-1/2} (Y_i - \overline{Y}) \|_{{\rm F}}^2, \quad \widetilde{s}^2 = \sum_{i=1}^T \| Q^{-1/2} (Y_i - Z) \|_{{\rm F}}^2 + \| \widetilde{Q}^{-1/2} (\widetilde{Y} - Z) \|_{{\rm F}}^2,
\end{align*} 
and $m_{\pi}$ denotes the marginal distribution with prior $\pi$.
Here, $m_{\pi} (\bar{Y}, s^2)/m_{\pi} (Y_1,\ldots, Y_T)$ and $m_{\pi} (Y_1,\cdots, Y_T, Z)/m_{\pi} (Z, \widetilde{s}^2)$ do not depend on $\pi$ due to sufficiency.
Also,
\begin{align*}
m_{\pi} (\overline{Y}, s^2) &= \frac{1}{2 s^2} E \{ \pi(M) \}, \quad {\rm vec} (M) \sim t_{m n(T-1)} \left\{ {\rm vec} (\overline{Y}), I_m \otimes \frac{s^2}{mn(T-1) T} Q \right\}, \\
m_{\pi} (Z, \widetilde{s}^2) &= \frac{1}{2 \widetilde{s}^2} E \{ \pi(M) \}, \quad {\rm vec} (M) \sim t_{m n T} \left\{ {\rm vec} (Z), I_m \otimes \frac{\widetilde{s}^2}{m n T^2} \widetilde{Q} \right\},
\end{align*}
where $t_{\nu} (\mu, \Sigma)$ denotes the multivariate t-distribution with $\nu$ degrees of freedom, mean $\mu$ and covariance $\Sigma$.
Figure $\ref{sv_shr_fig}$ (g) and (h) show the risk functions for $T=4$, $m=3$, $n=5$ and $Q=\widetilde{Q}=I_n$.
Here, the variance $\sigma^2 = 4$ is unknown.
We confirmed that similar results are obtained even when $Q \neq \widetilde{Q}$.
The performance of singular value shrinkage prior is qualitatively the same as in the known variance case.

\begin{figure}[htbp]
	\begin{minipage}{0.5\hsize}
		(a)
		\begin{center}
			\includegraphics[width=6cm]{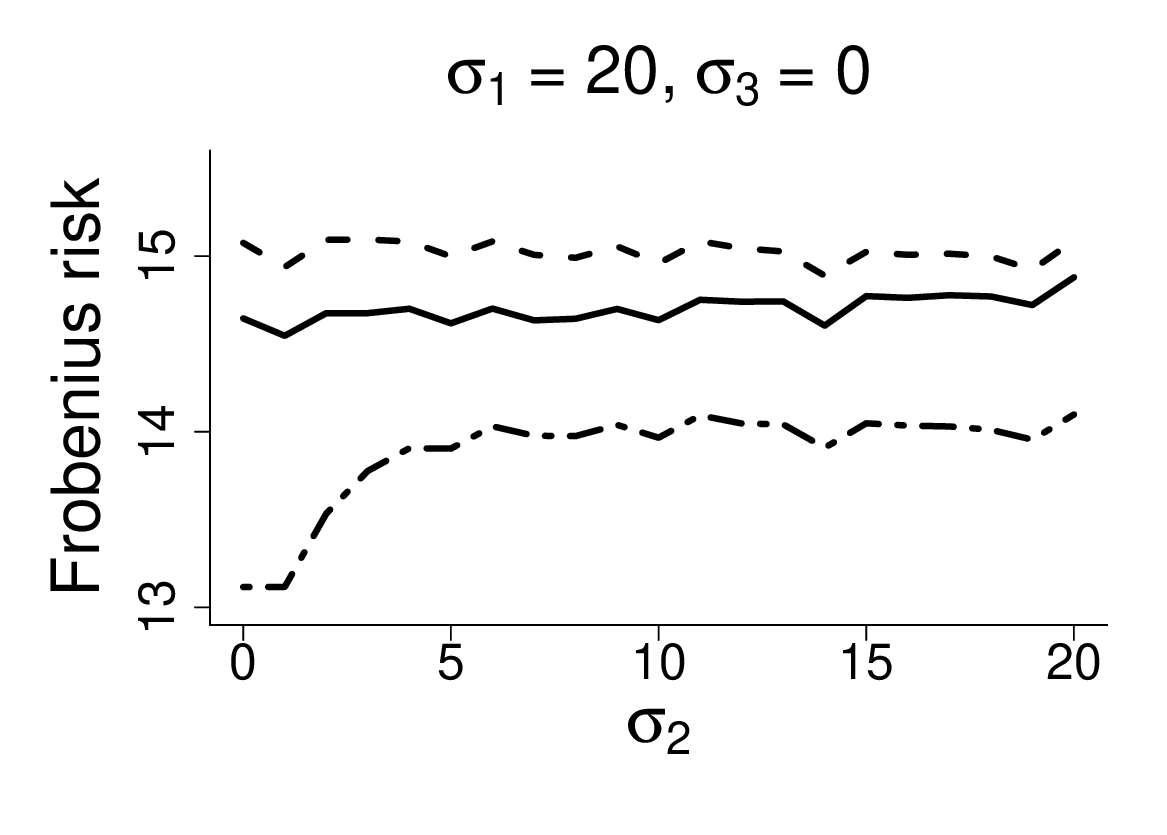}
		\end{center}
	\end{minipage}
	\begin{minipage}{0.5\hsize}
		(b)
		\begin{center}
			\includegraphics[width=6cm]{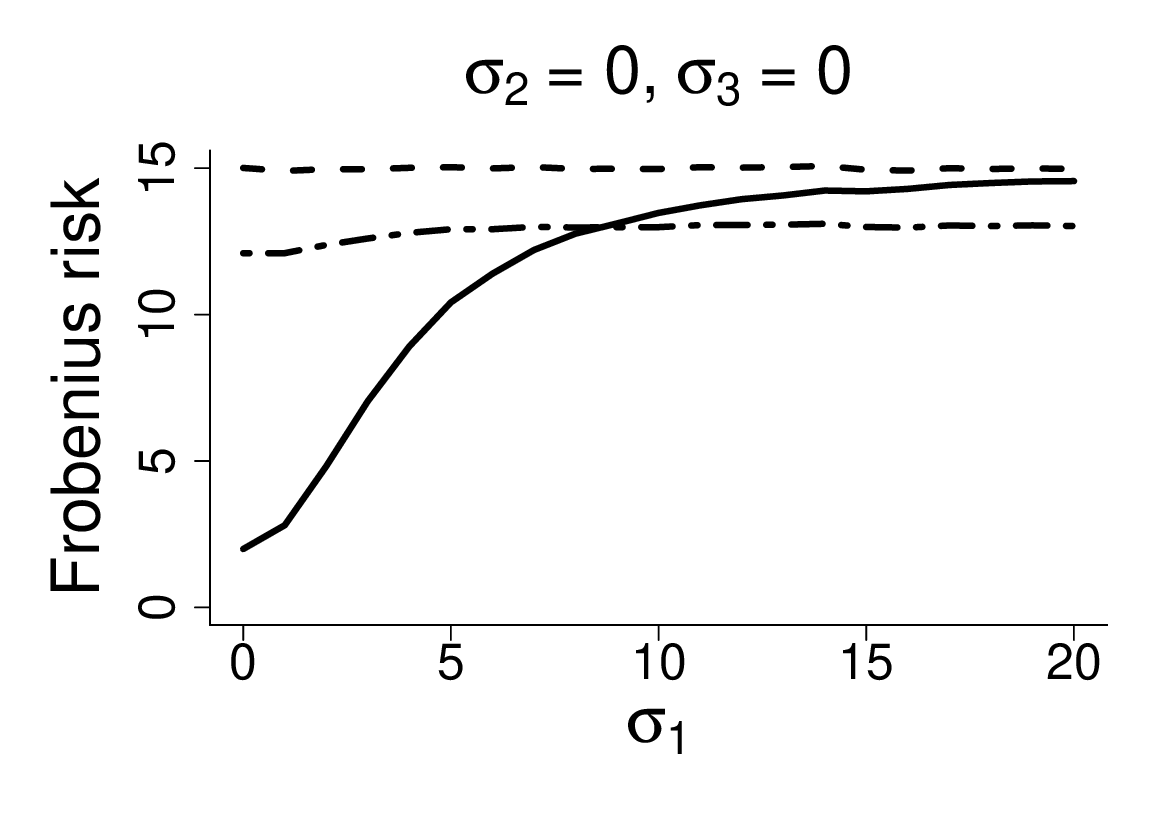}
		\end{center}
	\end{minipage}
	\begin{minipage}{0.5\hsize}
		(c)
		\begin{center}
			\includegraphics[width=6cm]{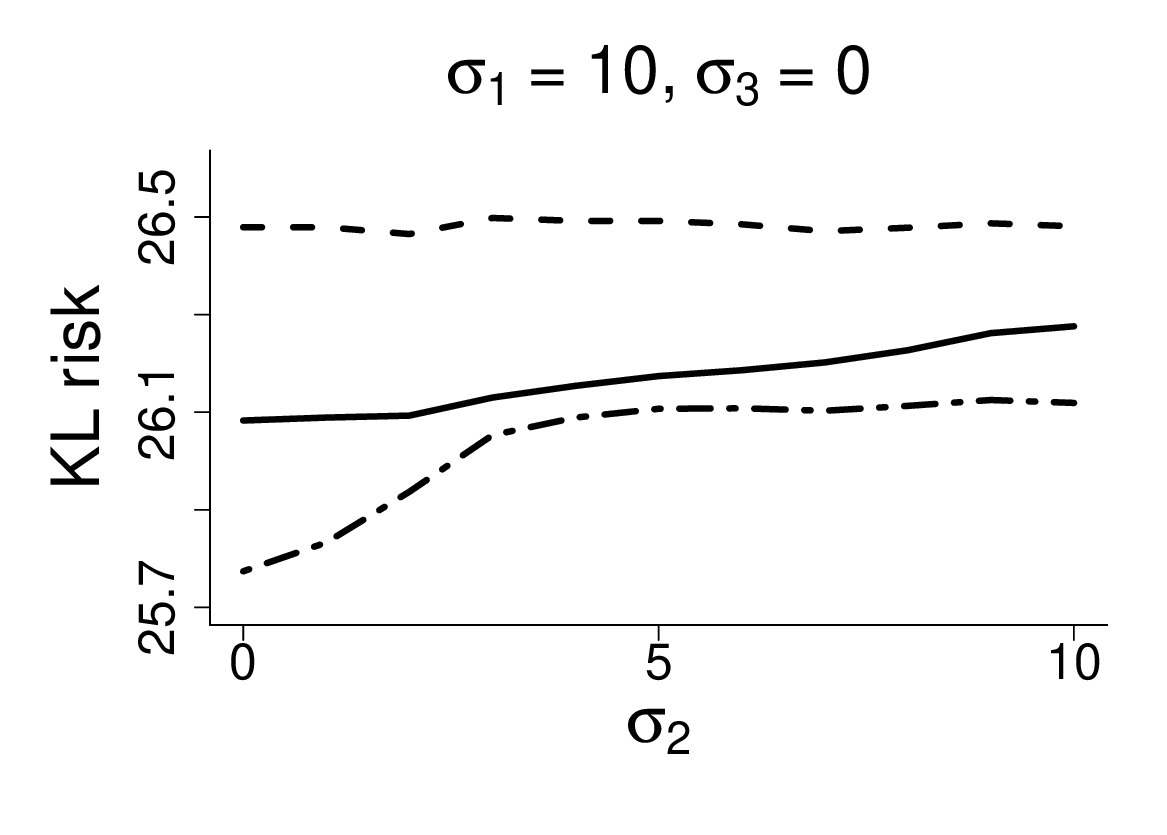}
		\end{center}
	\end{minipage}
	\begin{minipage}{0.5\hsize}
		(d)
		\begin{center}
			\includegraphics[width=6cm]{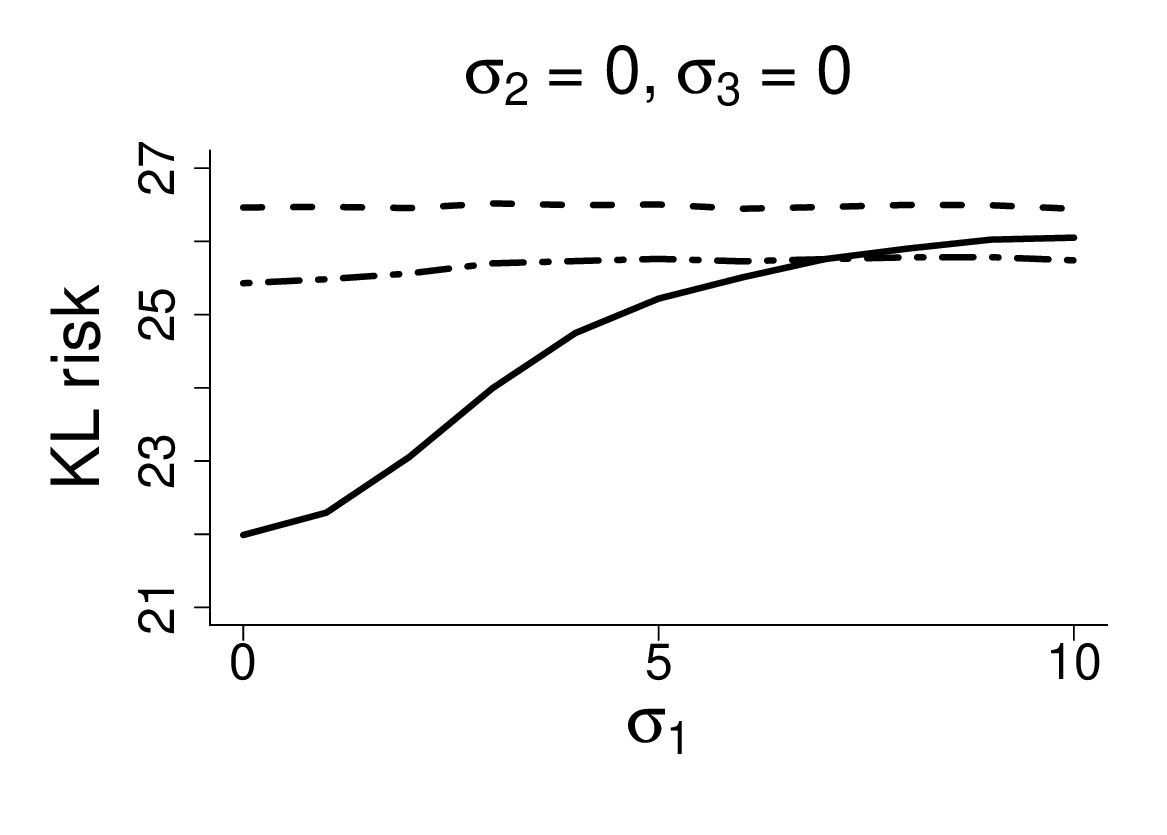}
		\end{center}
	\end{minipage}
	\begin{minipage}{0.5\hsize}
		(e)
		\begin{center}
			\includegraphics[width=6cm]{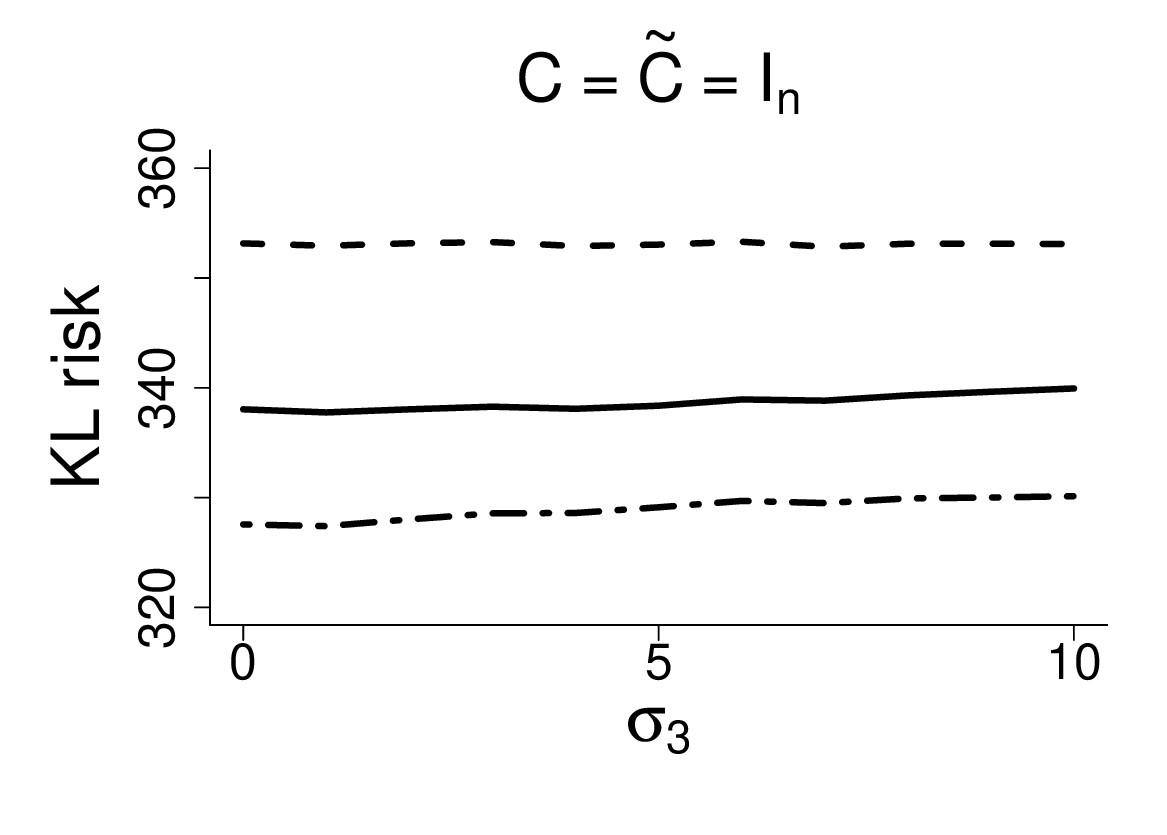}
		\end{center}
	\end{minipage}
	\begin{minipage}{0.5\hsize}
		(f)
		\begin{center}
			\includegraphics[width=6cm]{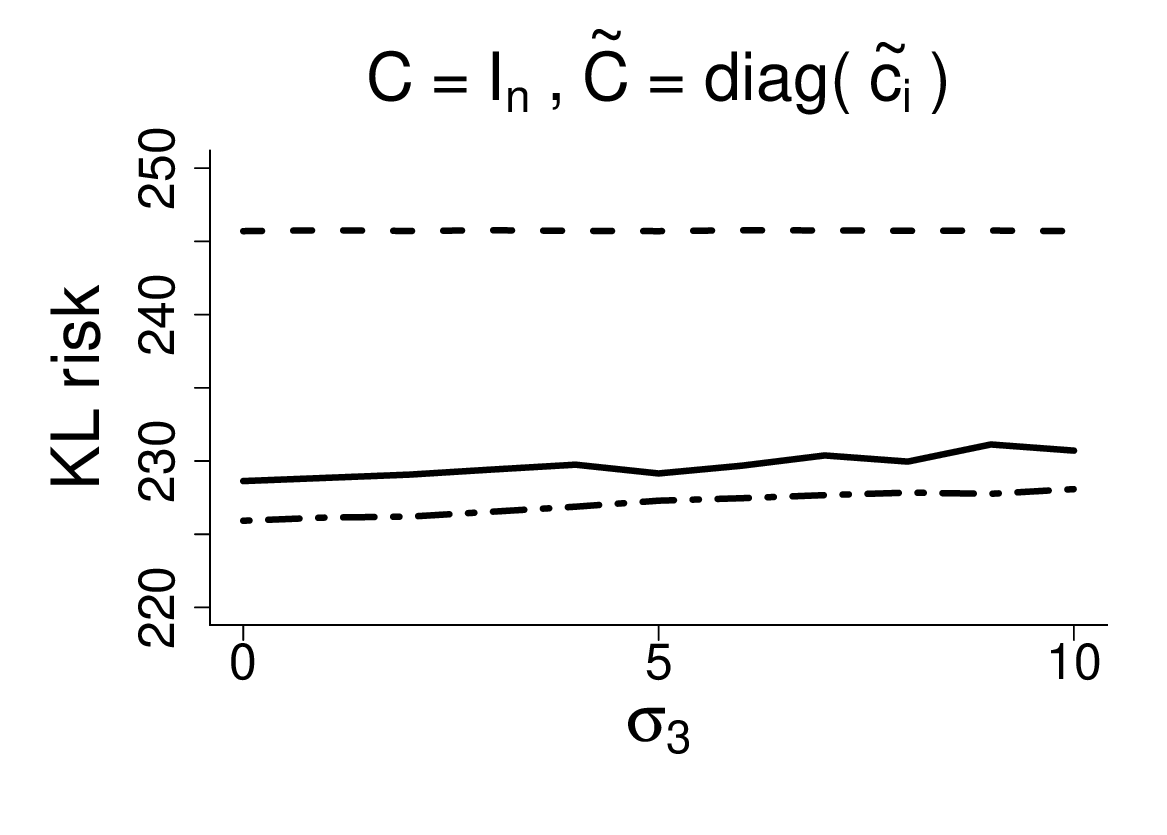}
		\end{center}
	\end{minipage}
	\begin{minipage}{0.5\hsize}
		(g)
		\begin{center}
			\includegraphics[width=6cm]{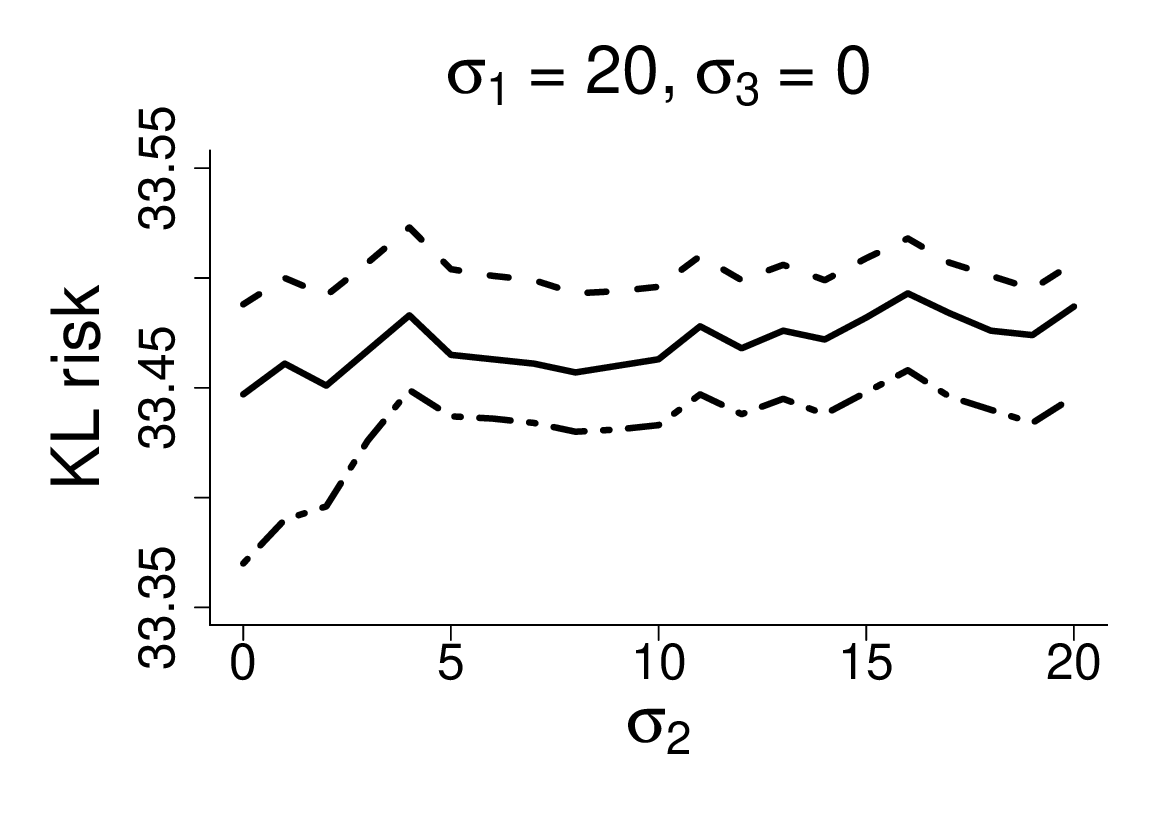}
		\end{center}
	\end{minipage}
	\begin{minipage}{0.5\hsize}
		(h)
		\begin{center}
			\includegraphics[width=6cm]{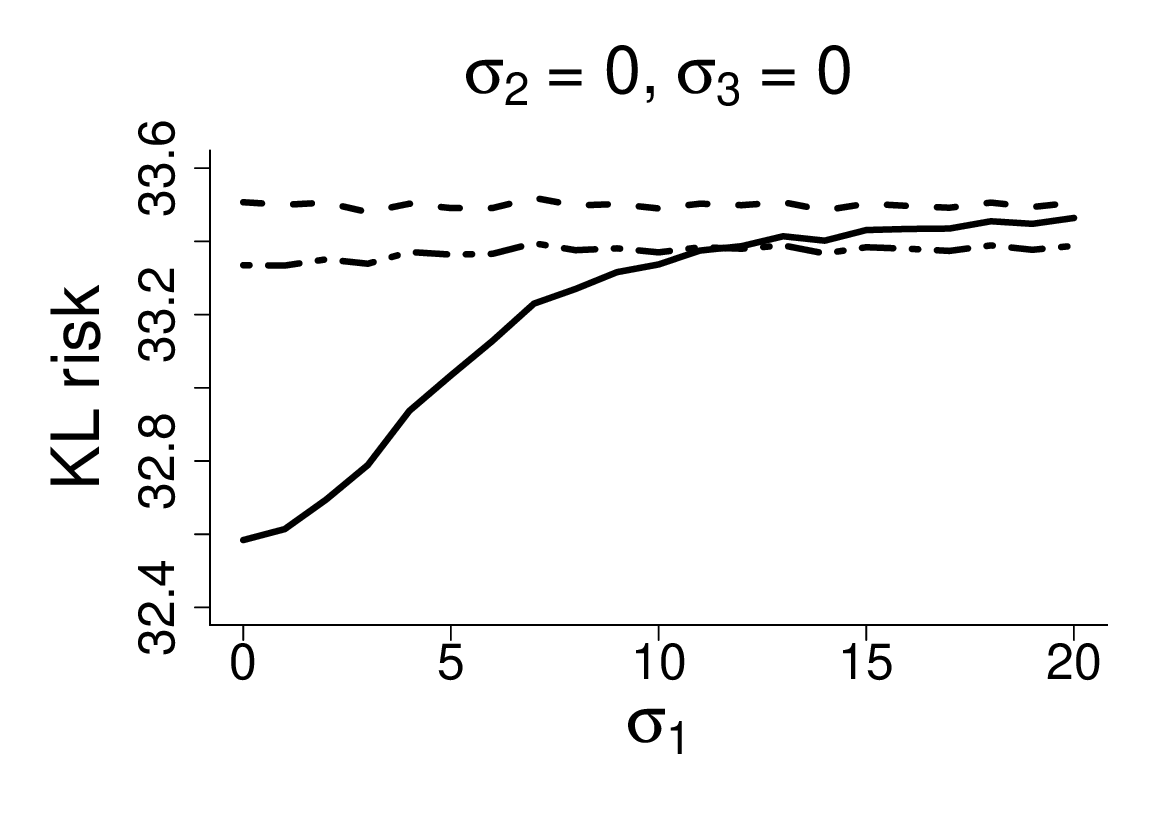}
		\end{center}
	\end{minipage}
	\caption{(a) (b) Risk functions of Bayes estimators when $m=3$, $n=5$, $C = I_n$ and $\Sigma = I_m$. (c) (d) Risk functions of Bayesian predictive densities when $m=3$, $n=5$, $C=\widetilde{C}=I_n$ and $\Sigma=\widetilde{\Sigma}=I_m$. (e) (f) Risk functions of Bayesian predictive densities when $m=10$, $n=20$, $\Sigma=\widetilde{\Sigma}=I_m$, $\sigma_1=20$, $\sigma_2=10$ and $\sigma_4=\cdots=\sigma_{10}=0$. In (f), $\widetilde{c}_1=\cdots=\widetilde{c}_{10}=1$ and $\widetilde{c}_{11}=\cdots=\widetilde{c}_{20}=2$. (g) (h) Risk functions of Bayesian predictive densities when $T=4$, $m=3$, $n=5$, $Q=\widetilde{Q}=I_n$ and $\sigma^2=4$ is unknown. dashed line: uniform prior, solid line: the Stein prior, dash-dot line: our prior}
	\label{sv_shr_fig}
\end{figure}

\section*{Acknowledgements}
The authors are grateful to the referees and the Associate Editor for valuable comments.
This work was supported by Grants-in-Aid for Scientific Research from the Japan Society for the Promotion of Science.

\section*{Appendix}
\subsection*{Proof of Theorem $\ref{th_piSVS_superharmonic}$}
First, we prove that $\det(M^{\top} M + \varepsilon I_{m})^{-(n-m-1)/2}$ is superharmonic for every $\varepsilon > 0$.

We write the $(i,j)$th entry of a matrix $X$ by $X_{i j}$ and the $(i,j)$th entry of $X^{-1}$ by $X^{i j}$.
Let $K = M^{\top} M + \varepsilon I$, so that $K_{a b} = \sum_i M_{i a} M_{i b} + \varepsilon \delta_{a b} = K_{b a}$,
where the subscripts $a$, $b$, $\ldots$ run from $1$ to $m$ and the subscripts $i$, $j$, $\ldots$ run from $1$ to $n$,
$\delta_{a b}$ is 1 when $a=b$ and 0 otherwise, and the $(i,j)$th entry of a matrix $X$ is denoted by $X_{ij}$.
From the definition,
\begin{equation}
\frac{\partial K_{b c}}{\partial M_{i a}} = \delta_{a c} M_{i b} + \delta_{a b} M_{i c}. \label{sigma_m}
\end{equation}

By using
\begin{equation*}
\frac{\partial}{\partial K_{ab}} \det K = K^{ab} \det K
\end{equation*}
and $\eqref{sigma_m}$, we obtain
\begin{align}
\frac{\partial}{\partial M_{i a}} \det K & =
\sum_{b,c} \frac{\partial K_{b c}}{\partial M_{i a}} \frac{\partial}{\partial K_{b c}} \det K
= 2 \sum_b M_{i b} K^{a b} \det K, \label{det_1st_diff}
\end{align}
where $K^{ab}$ is the $(a,b)$th entry of the inverse matrix of $K^{-1}$.
Therefore,
\begin{align}
\frac{\partial^2}{\partial M_{i a}^2} \det K
=& 2 K^{a a} \det K - 2 \left( \sum_{b,d} M_{i b} M_{i d} K^{a a} K^{b d} \right) \det K \nonumber \\ 
&- 2 \left( \sum_{b,c} M_{i b} M_{i c} K^{a b} K^{a c} \right)
\det K + 4 \left( \sum_{b,c} M_{i b} M_{i c} K^{a b} K^{a c} \right) \det K. \label{det_2nd_diff}
\end{align}
Here, we used
\begin{equation*}
\frac{\partial K^{a b}}{\partial M_{i a}} = -\sum_c M_{i c} K^{a a} K^{b c} - \sum_d M_{i d} K^{a d} K^{a b},
\end{equation*}
which can be derived by differentiating the equation $\sum_c K_{b c} K^{a c} = \delta_{a b}$.

We have
\begin{align}
\Delta (\det K)^{-(n-m-1)/2} & = \sum_{i,a} \frac{\partial^2}{\partial M_{i a}^2} (\det K)^{-(n-m-1)/2} \nonumber \\
& = \frac{n-m-1}{2} (\det K)^{-(n-m-1)/2} \sum_{i, a} (A_{i a} + B_{i a}), \label{Delta_K}
\end{align}
where
\begin{equation*}
A_{i a} = \frac{n-m+1}{2} (\det K)^{-2} \left( \frac{\partial}{\partial M_{i a}} \det K \right)^2, \quad
B_{i a} = - (\det K)^{-1} \frac{\partial^2}{\partial M_{i a}^2} \det K.
\end{equation*}
%

By using $\eqref{det_1st_diff}$, we obtain $A_{i a} = 2 (n-m+1) \left( \sum_b M_{i b} K^{a b} \right) \left( \sum_c M_{i c} K^{a c} \right)$.
Thus,
\begin{align*}
\sum_i A_{i a} 
&= 2 (n-m+1) \left( K^{a a} - \varepsilon \sum_b K^{a b} K^{a b} \right).
\end{align*}
On the other hand, by $\eqref{det_2nd_diff}$,
\begin{equation*}
\sum_i B_{i a} = -2 (n-m+1) K^{a a} -2 \varepsilon K^{a a} \sum_b K^{b b} + 2 \varepsilon \sum_b K^{a b} K^{a b}.
\end{equation*}
Hence, noting that $K^{b b} = \sum_i M_{i b}^2 + \varepsilon > 0$, we obtain
\begin{equation}
\sum_i (A_{i a} + B_{i a}) = -2(n-m) \varepsilon \sum_b K^{a b} K^{a b} -2 \varepsilon K^{a a} \sum_b K^{b b} < 0. \label{A+B}
\end{equation}

Thus, from $\eqref{Delta_K}$ and $\eqref{A+B}$,
\begin{equation}
\Delta (\det K)^{-(n-m-1)/2} < 0. \label{superharmonic_K}
\end{equation}
Therefore, $\det (M^{\top} M + \varepsilon I)^{-(n-m-1)/2}$ is superharmonic for every $\varepsilon > 0$.

Now, let $\pi^{(k)} (M) = \det \left( M^{\top} M + k^{-1} I_m \right)^{-(n-m-1)/2}$.
Then, $\pi^{(k)}$ is superharmonic by $\eqref{superharmonic_K}$ and $\pi^{(1)} \leq \pi^{(2)} \leq \cdots$, since
\begin{equation*}
\pi^{(k)} (M) = \prod_{i=1}^m  \left\{ \lambda_i (M^{\top} M) + \frac{1}{k} \right\}^{-(n-m-1)/2},
\end{equation*}
where $\lambda_i (M^{\top} M) \geq 0$ denotes the $i$th eigenvalue of $M^{\top} M$.
Also, $\lim_{k \rightarrow \infty} \pi^{(k)}(M) = \pi_{{\rm SVS}} (M)$ for every $M$.
Therefore, by Theorem 3.4.8 of \cite{Helms}, $\pi_{{\rm SVS}}$ is superharmonic.

\subsection*{Proof of Lemma $\ref{th_m_superharmonic}$}
Let $\phi (Y; C,\Sigma) = p(Y \mid O,C,\Sigma)$.
Then, putting $A = Y-M$,
\begin{align*}
m_{\pi} (Y; C,\Sigma) &= \int \phi (Y-M; C,\Sigma) \pi (M) {\rm d} M
= \int \phi (A; C,\Sigma) \pi (Y-A) {\rm d} A.
\end{align*}
Let $d=mn$. Now, for every $x \in \mathbb{R}^d$ and $\delta > 0$,
\begin{align*}
\frac{1}{\Omega_d \delta^{d-1}} \int_{S_{x,\delta}} m_{\pi}(Y; C,\Sigma) {\rm d} s(Y)
=& \frac{1}{\Omega_d \delta^{d-1}} \int_{S_{x,\delta}} \left\{ \int \phi (A; C,\Sigma) \pi (Y-A) {\rm d} A \right\} {\rm d} s(Y) \nonumber \\
=& \frac{1}{\Omega_d \delta^{d-1}} \int \phi (A; C,\Sigma) \left\{ \int_{S_{x,\delta}} \pi (Y-A) {\rm d} s(Y) \right\} {\rm d} A \nonumber \\
\leq & \frac{1}{\Omega_d \delta^{d-1}} \int \phi (A; C,\Sigma) \pi (-A) {\rm d} A
= m_{\pi} (O).
\end{align*}
Here, the second equation follows from Fubini's theorem and the inequality follows from the superharmonicity of $\pi$.
Therefore, $m_{\pi} (Y; C,\Sigma)$ is superharmonic.

\subsection*{Proof of Proposition $\ref{th_george}$}
From Lemma 2 of \cite{George06}, the difference of $R_{{\rm KL}}$ is
\begin{equation*}
R_{{\rm KL}} (M, \hat{p}_{{\rm I}}) - R_{{\rm KL}} (M, \hat{p}_{\pi}) = E_{M, v_0} \left\{ \log m_{\pi} (Z; v_0 I_n, I_m) \right\} - E_{M, v_1} \left\{ \log m_{\pi} (Z; v_1 I_n, I_m) \right\}, \label{KLrisk_diff0}
\end{equation*}
where $E_{M,v} (\cdot)$ denotes the expectation with respect to $Z \sim {\rm N}_{n,m} (M, v I_n, I_m)$. Then,
\begin{align}
R_{{\rm KL}} (M, \hat{p}_{{\rm I}}) - R_{{\rm KL}} (M, \hat{p}_{\pi}) = & E_{M, v_0} \left\{ \log m_{\pi} (Z; v_0 I_n, I_m) - \log m_{\pi} (Z; v_1 I_n, I_m) \right\} \nonumber \\
& + \left\{ E_{M, v_0} \log m_{\pi} (Z; v_1 I_n, I_m) - E_{M, v_1} \log m_{\pi} (Z; v_1 I_n, I_m) \right\}. \label{KLrisk_diff}
\end{align}
Since $\pi$ is superharmonic, from Lemma $\ref{lemma_helms}$ and $v_0 < v_1$, 
\begin{align*}
m_{\pi} (Z; v_0 I_n, I_m) &= E_{M \sim {\rm N}_{n,m} (Z, v_0 I_n, I_m)} \pi(M) \nonumber \\
& \geq E_{M \sim {\rm N}_{n,m} (Z, v_1 I_n, I_m)} \pi(M)
= m_{\pi} (Z; v_1 I_n, I_m).
\end{align*}
Thus, the first term of the right hand side of $\eqref{KLrisk_diff}$ is nonnegative.
Also, since $m_{\pi}$ is superharmonic from Lemma $\ref{th_m_superharmonic}$ and the logarithm of a superharmonic function is superharmonic, 
$\log m_{\pi} (Z; v_1 I_n, I_m)$ is superharmonic.
Then, from Lemma $\ref{lemma_helms}$ and $v_0 < v_1$, the second term of the right hand side of $\eqref{KLrisk_diff}$ is nonnegative.
Therefore, $\eqref{KLrisk_diff}$ is nonnegative.

\subsection*{Proof of Theorem $\ref{th_predictive_density}$}
We represent the Bayesian predictive density as
\begin{equation}
\hat{p}_{\pi} (\widetilde{Y} \mid Y) = \frac{m_{\pi} (Y, \widetilde{Y})}{m_{\pi} (Y)} = \frac{m_{\pi} (Y, \widetilde{Y})}{m_{\pi} (Z)} \frac{m_{\pi} (Z)}{m_{\pi} (Y)}, \label{p_pi}
\end{equation}
where $m_{\pi}$ denotes the marginal distribution with prior $\pi$.
Here, $m_{\pi} (Y,\widetilde{Y})/m_{\pi} (Z)$ does not depend on $\pi$, because $Z$ is sufficient for $M$.
Therefore, we obtain
\begin{equation}
\frac{m_{\pi} (Y, \widetilde{Y})}{m_{\pi} (Z)} = \frac{p (Y, \widetilde{Y} \mid M)}{p (Z \mid M)}
= \pi^{-m n/2} {\rm etr} \left\{ -\frac{1}{4} (\widetilde{Y}-Y)^{\top} (\widetilde{Y}-Y) \right\}. \label{m_pi_ratio}
\end{equation}
Next, we calculate $m_{{\rm SVS}} (Y)$ and $m_{{\rm SVS}} (Z)$. From the definition,
\begin{equation*}
m_{{\rm SVS}} (Y) = \int p(Y \mid M) \pi(M) {\rm d} M.
\end{equation*}
Now we interpret $p(Y \mid M)$ as the probability density of $M \sim {\rm N}_{n,m} (Y, I, I)$.
Then $m_{{\rm SVS}} (Y)$ is viewed as the expectation of $\pi(M)$.
From Theorem 3.5.1 in \cite{Gupta}, $S = M^{\top} M$ has a noncentral Wishart distribution $S \sim W_m (n, I_m, Y^{\top} Y)$.
Therefore,
\begin{equation*}
m_{{\rm SVS}} (Y) = E \left\{ (\det S)^{-(n-m-1)/2} \right\}.
\end{equation*}
By using Theorem 3.5.6 in \cite{Gupta}, we obtain
\begin{align*}
E &\left[ (\det S)^{-(n-m-1)/2} \right]
&= \frac{2^{-\frac{m(n-m-1)}{2}} \Gamma_m \left( \frac{m+1}{2} \right)}
{\Gamma_m \left( \frac{n}{2} \right)} {\rm etr} \left( -\frac{1}{2} Y^{\top} Y \right)
{}_1 F_1 \left( \frac{m+1}{2}; \frac{n}{2}; \frac{1}{2} Y^{\top} Y \right).
\end{align*}
Using this, we obtain
\begin{align}
m_{{\rm SVS}} (Y) & = \frac{2^{-\frac{m(n-m-1)}{2}} \Gamma_m \left( \frac{m+1}{2} \right)}{\Gamma_m \left( \frac{n}{2} \right)} {\rm etr} \left( -\frac{1}{2} Y^{\top} Y \right) {}_1 F_1 \left( \frac{m+1}{2}; \frac{n}{2}; \frac{1}{2} Y^{\top} Y \right). \label{marginal}
\end{align}
Similarly, we obtain
\begin{equation}
m_{{\rm SVS}} (Z) = \frac{2^{-m n} \Gamma_m \left( \frac{m+1}{2} \right)}{\Gamma_m \left( \frac{n}{2} \right)} {\rm etr} \left( -\frac{1}{4} Z^{\top} Z \right) {}_1 F_1 \left( \frac{m+1}{2}; \frac{n}{2}; \frac{1}{4} Z^{\top} Z \right). \label{m_SVS_z}
\end{equation}

Substituting $\eqref{m_pi_ratio}$, $\eqref{marginal}$ and $\eqref{m_SVS_z}$ into $\eqref{p_pi}$, we obtain the result.

\end{document}